\title{CWENO: uniformly accurate reconstructions for balance laws}
\author{I. Cravero\thanks{Universit\`a di Torino, Italy.}, 
G. Puppo\thanks{Universit\`a dell'Insubria, Como, Italy.}, 
M. Semplice\thanks{Universit\`a di Torino, Italy. Correspondence to matteo.semplice@unito.it}, 
G. Visconti\thanks{Universit\`a dell'Insubria, Como, Italy.}
}
\date{Version: \FilemodToday{\jobname.tex}}
\DeclareMathOperator{\CWENO}{\mathsf{CWENO}}
\DeclareMathOperator{\WENO}{\mathsf{WENO}}
\DeclareMathOperator{\diam}{diam}
\DeclareMathOperator{\sign}{sign}
\newtheorem{definition}{Definition}
\newtheorem{example}{Example}
\newtheorem{remark}{Remark}
\newtheorem{summary}{Summary}
\newtheorem{proposition}{Proposition}
\newtheorem{theorem}{Theorem}
\newtheorem{test}{Test}
\newcommand{\um}{\nicefrac{1}{2}}
\newcommand{\Ogrande}{\mathcal{O}}
\newcommand{\dx}{\mathrm{d}x}
\newcommand{\R}{\mathbb{R}}
\newcommand{\Popt}{P_{\text{\sf opt}}}
\newcommand{\Prec}{P_{\text{\sf rec}}}
\newcommand{\Precj}{P_{\text{\sf rec},j}}
\newcommand{\Poly}[1]{\mathbb{P}^{#1}}
\newcommand{\grado}{g}
\newcommand{\ca}[1]{\overline{#1}}
\newcommand{\emme}{\widehat{m}}
\begin{document}
\maketitle

\paragraph{Abstract}
In this paper we introduce a general framework for defining and studying essentially non-oscillatory reconstruction procedures of arbitrarily high order accuracy, interpolating data in a central stencil around a given computational cell ($\CWENO$). This technique relies on the same selection mechanism of  smooth stencils adopted in $\WENO$, but here the pool of candidates for the selection includes polynomials of different degrees. This seemingly minor difference allows to compute an analytic expression of a polynomial interpolant, approximating the unknown function uniformly within a cell, instead of only at one point at a time. 
For this reason this technique is particularly suited for balance laws for finite volume schemes, when averages of source terms require high order quadrature rules based on several points; in the computation of local averages, during refinement in h-adaptive schemes; or in the transfer of the solution between grids in moving mesh techniques, and in general when a globally defined reconstruction is needed. Previously, these needs have been satisfied mostly by ENO reconstruction techniques, which, however, require a much wider stencil then the $\CWENO$ reconstruction studied here, for the same accuracy.

\paragraph{MSC} 65M08, 65M12.

\paragraph{Keywords} high order accuracy, essentially non oscillatory, finite volume schemes, balance laws, non uniform grids.

\section{Introduction}

\paragraph{Motivation.} Conservation laws arise in many fields in applied  mathematics, such as gas dynamics, magneto-hydrodynamics, or even traffic flow. When a source term is present, these equations are called balance laws, and an even wider field of applications opens up. Balance laws describe in fact phenomena in environmental or meteorological fields, plasmas,  astrophysics. 

In many cases, fast and efficient algorithms are crucial, and this means to be able to provide robust high order schemes, which yield accurate solutions even on coarse grids. Moreover, it is important to be able to implement such schemes on adaptive, and therefore non uniform, grids. This paper is concerned with the analysis of a class of algorithms that, starting from a set of data, permit to reconstruct with high order accuracy a representation in space of the underlying function.

We start from a system of balance laws
\begin{equation}\label{eq:TheEq}
\partial_t u + \sum_{i=1}^n \partial_{x_i} f_i (u) = s(u;x,t).
\end{equation}
Here $u(x,t) : \mathbb{R}^n\times \mathbb{R}^+ \rightarrow \mathbb{R}^m$ is the unknown function, $n$ is the number of space dimensions, $m$ is the number of equations, and $t$ denotes time. The functions $f_i(u) : \mathbb{R}^m\rightarrow \mathbb{R}^m$ are called fluxes, and usually they are  smooth known functions of $u$, with Jacobians $\sum_i v_i f'_i$ diagonalizable with real eigenvalues, along all possible directions $v \in \mathbb{R}^n$. Finally, $s: \mathbb{R}^m\times \mathbb{R}^n\times \mathbb{R}^+\to\R^m$ is the source term, which is a known, bounded function of the unknown $u$, but it also  may depend on space (as in the shallow water equations), or even time.
Suppose the equation is defined on a domain $\mathcal{D} \subseteq \mathbb{R}^n$, with suitable initial and boundary conditions. 

To integrate this system of  equations numerically, one must define a {\em grid} in the domain $\mathcal{D}$. In this work, we will propose schemes that are directly applicable  when the grid is a non uniform globally Cartesian grid, so that $\mathcal{D}$ is covered by the union of rectangles $\mathcal{D}\subseteq \bigcup \; \Omega_k$. Note that boundary conditions for general $\mathcal{D}$ could then be dealt with immersed boundary techniques, see e.g. \cite{Iollo:12}.

On each cell $\Omega_k$, define the {\em cell average} of the solution,
\begin{equation}
\ca{u}_k (t) = \frac{1}{\left|\Omega_k\right|} \int_{\Omega_k} u(x,t)\; \dx.
\end{equation}
Using the method of lines, we integrate the balance law \eqref{eq:TheEq} on each of the $\Omega_k$, obtaining the finite volume formulation
\begin{equation}
\frac{\mathrm{d}}{\mathrm{d} t} \ca{u}_k = 
- \frac{1}{\left|\Omega_k\right|} \int_{\partial \Omega_k} {\bf f} \cdot {\bf n}_k 
+ \frac{1}{\left|\Omega_k\right|} \int_{\Omega_k} s(u;x,t) \; \dx, \label{eq:TheSemidis}
\end{equation}
where ${\bf f} = [f_1,\dots,f_n]$ and ${\bf n}_k$ is the outward normal to the cell $\Omega_k$. 
To transform \eqref{eq:TheSemidis} in a Finite Volume numerical scheme, a recipe for the evaluation of the fluxes across the cell boundary must be provided, together with a numerical method to integrate the resulting system of ODE's. This process must involve a {\em reconstruction} algorithm that, starting from the cell averages at a given time $t$, reconstructs approximate values of the solution $u$ in all the quadrature points along the contour $\partial \Omega_k$ of each cell (to evaluate the numerical fluxes) and in all quadrature nodes within $\Omega_k$ (to compute the cell average of the source). The purpose of the present work is to study a class of reconstructions which provide an approximation of the underlying solution which is {\em uniformly} accurate within the whole cell. In this fashion, the reconstruction can be evaluated simultaneously on all quadrature points needed to approximate \eqref{eq:TheSemidis}, thus only one reconstruction step is needed for each evaluation of the right hand side.

\paragraph{Background.}
A very popular algorithm to compute the reconstruction in high order finite volume  schemes for conservation and balance laws is WENO (Weighted Essentially Non Oscillatory), see the seminal paper \cite{JiangShu:96} and the reviews \cite{ Shu97,Shu:2009:WENOreview}, but the literature on this technique is huge. WENO is based on a piecewise reconstructing polynomial that reproduces a high order polynomial using data from a wide stencil in regions of smoothness (thus providing high accuracy), and that degrades automatically to lower order polynomials when a discontinuity is detected within the large stencil. The lower order polynomials are based on smaller stencils, so that they may avoid the discontinuity. The high order polynomial is never actually computed, but high accuracy is in fact obtained by blending the lower order polynomials with carefully designed non  linear weights that reproduce the value that would be given, at one particular point, by the high order polynomial. The high order optimal polynomial is thus replicated only at one point at a time. If the reconstruction is needed at several points, as in the quadratures required by the integration of \eqref{eq:TheSemidis}, then several reconstruction steps must be computed, each time with different weights.

This problem is particularly severe in balance laws, such as the shallow water equation, where one needs to evaluate the source at quadrature points in the interior of the cell.
For example, optimal weights for the cell centre do not exist for $\WENO$ constructions of order $4k-1$ for any integer $k\geq1$ (e.g. $\WENO3$), 
and  are not always in $[0,1]$ for $\WENO$ constructions of order $4k+1$ (e.g. $\WENO5$), 
see \cite[p. 194]{QiuShu:02}. 
There is a technique to treat negative weights \cite{ShiHuShu:2002}, but it requires to compute two different reconstructions per point. Also, the evaluation of a posteriori error indicators may require to compute accurate quadratures of some form of the local residual, as in the the case of the indicator based on the entropy production, see 
\cite{P:entropy,PS:entropy,PS:shentropy}. Here too, the possibility of computing cheaply the reconstruction at interior nodes is crucial.

Moreover, in non-uniform grids, WENO weights depend on the mesh geometry. For example, in 1D, the weights  depend on the ratio of the sizes of the neighbour and of the current cell, see e.g. \cite{WangFengSpiteri,PS:shentropy}, and they additionally also depend on the disposition of the neighbouring cells in 2D \cite{HuShu:1999,DK:2007:linear,PS:HYP12}. 

A source of non uniform grids typically is mesh adaptivity of $h$-type
or moving mesh algorithms.
Both these techniques need the spatial reconstruction for time advancement, but also in order to perform another important task. In fact, they both involve a change in the mesh that occurs after the conclusion of each time step. In these cases, it is necessary to project the solution from one grid to the new mesh produced by the adaptive algorithm. The  cells of the new grid are subcells of the previous ones in the case of h-AMR (see e.g. \cite{SCR:16,pyClaw:paper}) while they lie in more general positions in the case of moving mesh methods (see e.g.\cite{TangTang:2003}). For schemes of order at least $3$, one must be able to compute the subcell averages with the same accuracy of the scheme and this requires   reconstructions at inner quadrature points, see e.g. \cite{SCR:16}.

Other schemes for which these reconstructions can be of interest are the $\mathrm{P_NP_M}$ schemes of \cite{Dumbser:2008:PnPm} in which at each step a reconstruction from cell averages is required to compute a reliable reconstructing polynomial  inside each cell. Here one needs the functional expression of the polynomial and not just its point values.

\paragraph{Summary.}
The first instance in which the need to have an expression for the reconstruction polynomial was answered in $\WENO$-type constructions, was in the third order central scheme of \cite{LPR:99}. There the authors introduced a new reconstruction procedure of order three.
In this paper we extend this idea to a general technique to obtain a high order, essentially non-oscillatory, interpolation polynomial that is globally defined in the whole cell 
(\S\ref{sec:CWENO}).

The new reconstructions are based on an optimal polynomial defined on a central large stencil and on a set of lower degree polynomials defined on sub-stencils of the bigger one.
The selection mechanisms of the polynomials actually employed to compute the reconstruction is similar to the $\WENO$ one (reviewed in \S\ref{sec:WENO}), but it includes an extra polynomial of the same degree of the optimal one.
 For this reason, following \cite{LPR:99}, we call the reconstructions Central WENO ($\CWENO$).
The main difference between WENO and CWENO is that the latter does not compute reconstructed values at given points in the cell but rather a reconstruction polynomial defined in the whole cell. 

The convergence rates of the $\CWENO$ reconstructions, when the 
Jiang-Shu smoothness indicators of \cite{JiangShu:96} are employed, depends 
on the value chosen for the small parameter $\epsilon$ appearing in the algorithm.
This value must be chosen carefully due to the behaviour of the smoothness indicators close to local extrema and this issue is thus present in the $\WENO$ setting as well. Many techniques were proposed to overcome this difficulty in the $\WENO$ framework, \cite{HAP:2005:mappedWENO,FengHuangWang:14,DB:2013,Arandiga}. 
The technique of \cite{Arandiga}, consisting in choosing a value for $\epsilon$ as a function of the mesh size, was extended to the $\CWENO$ setting, at order 3, by \cite{Kolb2014} on uniform grids and by \cite{CS:epsweno} on a non-uniform mesh. In \S\ref{sec:anal:smooth} we show that the choices $\epsilon\sim h^2$ and $\epsilon\sim h$ guarantee the optimal convergence rate for a $\CWENO$ construction of any order, under the condition that no polynomial involved in the reconstruction is of degree smaller than one half of the degree of the optimal polynomial.

The essentially non-oscillatory behaviour of $\CWENO$ when the data to be interpolated contain a discontinuity is, from a practical point of view, very similar to that of $\WENO$. However, from a theoretical point of view, the situation is quite different, due to the employment of the extra candidate polynomial of high degree. In \S\ref{sec:anal:disc} we introduce a condition (that we call {\em Property R}) that must be satisfied by this extra high degree candidate polynomial in order to ensure that the reconstruction has essentially non-oscillatory properties. 
Furthermore, we show that this property is satisfied by all the one-dimensional $\CWENO$ constructions of any order.

Finally, in \S\ref{sec:numerical} we provide extensive numerical evidence of the accuracy and non-oscillatory behaviour of $\CWENO$ constructions of order up to 9. Furthermore, in order to test the reconstruction at points in the interior of the computational cells, we show applications to the Euler gas dynamics equation with source terms and to the development of well-balanced schemes for the shallow water equation.

\section{A review of WENO reconstructions} \label{sec:WENO}
Before introducing the $\CWENO$ class of reconstructions, we briefly review the $\WENO$ one.
Fixing a stencil $\{\Omega_{j-\grado},\ldots,\Omega_{j+\grado}\}$, 
the definition of $\Precj$ that maximises the accuracy for smooth functions $u(x)$ is clearly the polynomial $\Popt$ of degree $G=2\grado$ which interpolates the $2\grado+1$ cell averages $\ca{u}_{j-\grado},\ldots,\ca{u}_{j+\grado}$,  
which is easily computed following \cite{Shu97}.
Obviously, such a polynomial might be very oscillatory if a jump discontinuity is present in the stencil. In view of this, $\WENO$ never computes $\Popt$ directly, but makes instead a clever use of all the polynomials of lower degree ($\grado$) whose stencil avoids the discontinuity.

\begin{definition}
Fix a point $\hat{x}\in\Omega_j$ and an integer $\grado$.
The $\WENO$ reconstruction operator is given by
\[
R_j(\hat{x})
=
\WENO(P_1,\ldots,P_{\grado+1};\Popt,\hat{x}) \in \R,
\]
where the $P_k$'s, $k=1,\dots,\grado+1$ are polynomials of degree $\grado$, $\Popt$ is a polynomial of degree $G=2\grado$ which guarantees the required accuracy $2\grado+1$.
The point value $R_j(\hat{x})$  is computed as follows:
\begin{enumerate}
\item First, find a set of coefficients $d_1(\hat{x}),\ldots,d_{\grado+1}(\hat{x})$  such  that
\[ \sum_{k=1}^{\grado+1} d_k(\hat{x})P_k(\hat{x}) = \Popt(\hat{x}) \quad
\text{ and } \quad 
\sum_{k=1}^{\grado+1} d_k(\hat{x})=1.
\]
These will be called optimal or linear coefficients.
\item Then nonlinear coefficients $\omega_k$ are computed from the optimal (or linear) ones as
\begin{equation} \label{eq:WENOweights}
\alpha_k(\hat{x}) = \frac{d_k(\hat{x})}{(I[P_k]+\epsilon)^t}
\qquad
\omega_k(\hat{x}) = \frac{\alpha_k(\hat{x})}{\sum_{i=1}^{\grado+1}\alpha_i(\hat{x})},
\end{equation}
where $I[P_k]$ denotes a suitable regularity indicator (to be defined later) evaluated on the polynomial $P_k$, $\epsilon$ is a small positive quantity and $t\geq2$.
\item Finally
\begin{equation}
\Precj(\hat{x}) = \sum\limits_{k=1}^{\grado+1} \omega_k(\hat{x}) P_k(\hat{x}) 
\end{equation}
\end{enumerate}
\end{definition}

The regularity indicators should measure the ``smoothness'' of the polynomial $P_k$ on the computational cell $\Omega_j$.
A regularity indicator is a positive semi-definite operator from $\mathbb{P}$
to $\mathbb{R}^+$, which typically depends on the derivatives of the polynomial in order to detect its oscillatory behaviour. The classical example is the Jiang-Shu indicator, defined in \cite{JiangShu:96} as
\begin{equation}  
\label{eq:JiangShuInd} 
I[P] = \sum_{l\geq 1} \diam(\Omega)^{2l-1} \int_{\Omega} \left(\tfrac{\mathrm{d}^l}{\mathrm{d}x^l} P(x)\right)^2 \dx
\end{equation}
Note that the summation is in fact finite, and that on smooth data $I[P]=\Ogrande(\diam(\Omega)^2)$ at most. In this work we will employ the Jiang-Shu indicators, but other possibilities were explored in \cite{DB:2013,HKLY:2013}. 

We record here an useful property of these indicators.
In what follows, $h$ will denote $\diam(\Omega)$ for a generic cell in the grid.

\begin{remark} \label{rem:IndLip}
The Jiang-Shu indicator of a polynomial $P$ is Lipschitz continuous with respect to the cell averages $\ca{u}_{j-r},\ldots,\ca{u}_{j+s}$, with $r$ and $s$ positive integers, interpolated by $P$.
In fact, $P$ depends linearly on the data and thus $I_P$ is a positive semi-definite quadratic form with respect to $\ca{u}_{j-r},\ldots,\ca{u}_{j+s}$.
\end{remark}

\begin{summary} \label{summary:WENO}
The ingredients of the success of the $\WENO$ reconstruction are the following.
\begin{enumerate}
\item \label{request:I}
The regularity indicators \eqref{eq:JiangShuInd}, which are of size $\Ogrande(h^2)$ on regular data, but $I[P]\asymp1$ in the case of discontinuous data. With $f(h)\asymp g(h)$ (for $h\to 0$) we mean that the limit of $f(h)/g(h)$ exists, is finite and not zero.
\item \label{request:smooth}
 Thanks to the definition of the nonlinear weights, 
the reconstruction error at point $\hat{x}$ is given  by
\begin{equation}
\label{eq:recerror:Dd}
\begin{aligned}
u(\hat{x})- R_j(\hat{x}) &=
u(\hat{x})- \Popt(\hat{x}) + \sum_{k=1}^{g+1} \big(d_k(\hat{x})-\omega_k(\hat{x})\big)P_k(\hat{x})
\\
&=\underbrace{(u(\hat{x})-\Popt(\hat{x}))}_{\Ogrande(h^{2\grado+1})}
+ \sum_{k=1}^{g+1} 
\big(d_k(\hat{x})-\omega_k(\hat{x})\big)
\underbrace{(P_k(\hat{x})-u(\hat{x}))}_{\Ogrande(h^{\grado+1})} 
\end{aligned}
\end{equation}
where the last equality is true since $\sum_{k=1}^{g+1}d_k=\sum_{k=1}^{g+1}\omega_k=1$. 
From the above formula it is clear that the accuracy of the $\WENO$ reconstruction equals the accuracy of  $\Popt$ only if $d_k-\omega_k=\Ogrande(h^{\grado})$ in the case of smooth data. This is ensured by the regularity of the smoothness indicators and by an appropriate choice of the parameter $\epsilon$ (see \cite{Arandiga,CS:epsweno}).
\item \label{request:disc}
In the case of discontinuous data, suppose first that there is one smooth substencil, so that at least one of the regularity indicators is $\Ogrande(h^2)$. Then, the normalisation procedure in \eqref{eq:WENOweights} ensures that
for all $k$ such that $I[P_k]\asymp1$, then $\omega_k\simeq0$. In this way, only the $P_k$'s with $I[P_k]=\Ogrande(h^2)$ contribute to the reconstruction. This is the case provided there is one singularity in the stencil, which does not occur in the central cell.
\item \label{request:discCx} On the other hand, if the discontinuity is in the central cell, each $I[P_k]\asymp1$. In the case of finite differences (see \cite{Shu97,HOEC:1986}) one can prove that each candidate polynomial is monotone in the central cell and thus deduce that the reconstructed value will not increase the total variation. In the case of finite volumes, instead, the reconstructed  data is not guaranteed to satisfy Total Variation Diminishing (TVD) bounds, although typically spurious oscillations are not observed.
\end{enumerate}
\end{summary}

For example, for reconstructions from point values applied to the case of Heaviside data,
all candidate polynomials are bounded by the values before and after the jump, see \cite[p. 347]{Shu97}.
The reconstruction is then total variation bounded for the case of Heaviside data with a Lipschitz perturbation, see \cite[Theor 4.1, p. 359]{HOEC:1986}.


This procedure is  extremely successful and allowed to construct very high order essentially non-oscillatory schemes (see \cite{Shu:2009:WENOreview} and references therein), but it has a few shortcomings.
The linear coefficients $d_k(\hat{x})$ depend explicitly on the location of $\hat{x}$ inside the cell $\Omega_j$. (Their values have been tabulated for the cell boundaries in one space dimension for uniform grids \cite{Shu97, Arandiga}). 
In order to construct a finite volume scheme, the computation of linear and nonlinear weights is required at different points on the cell boundary: two points in one space dimension and at least six (on triangles) and 8 on a Cartesian mesh for a scheme of order at least three in two space dimensions. Even more reconstructions are needed for balance laws, where the cell average of the source has to be evaluated, and for higher dimensions.

Moreover, for interior points, the linear coefficients may not exist (e.g. WENO3 at cell centre) or be non-positive (e.g. WENO5 at cell centre). Results on the existence of $d_k(\hat{x})$ for general $\hat{x}$ have been proven for example in \cite{Carlini:06, Gerolymos:12}. A procedure to circumvent the appearance of negative weights was proposed in \cite{ShiHuShu:2002}.

From the next section, we study the $\CWENO$  schemes which are not affected by any of these troubles, since the linear coefficients are not needed to guarantee the accuracy of the reconstruction in smooth cases. Thus they can be chosen rather arbitrarily and be the same for every reconstruction point in the cell. An additional advantage is that the computation of the $\alpha_k$ and the $\omega_k$ is performed only once per cell and not once per reconstruction point.

\section{The CWENO operator} \label{sec:CWENO}
In this section we introduce a general framework for defining and studying $\CWENO$ reconstructions, which encompasses the one of \cite{LPR:99} and all variations published later in one and more space dimensions, on structured and unstructured grids.
Moreover, this will allow us to propose higher order extensions.

\begin{definition} \label{def:CWENO}
Consider a set of data (point values or cell averages) and a polynomial $\Popt$ of degree $G$, which interpolates in some sense all the given data ({\em optimal polynomial}).
The $\CWENO$ operator computes a reconstruction polynomial
\[\Prec = \CWENO(\Popt,P_1,\ldots,P_{\emme}) \in \Poly{G}\]
from  $\Popt\in\Poly{G}$ and a set of lower order
alternative polynomials $P_1,\ldots,P_{\emme} \in\Poly{g}$, where $g<G$ and $\emme\geq1$. 
The definition of $\Prec$ depends on the choice of a set of positive real 
coefficients $d_0,\ldots,d_{\emme}\in[0,1]$ such that $\sum_{k=0}^{\emme} d_k=1$, $d_0\neq0$ (called {\em linear coefficients}) as follows:
\begin{enumerate}
\item first, introduce the polynomial $P_0$ defined as
\begin{equation}
P_0(x) = \frac{1}{d_0}\left(\Popt(x)-\sum_{k=1}^{\emme}d_kP_k(x)\right) \in\Poly{G} 
\end{equation}
\item then the nonlinear coefficients $\omega_k$ are computed from the linear ones as
\begin{equation} \label{eq:OmegaFromD}
\alpha_k = \frac{d_k}{(I[P_k]+\epsilon)^t}
\qquad
\omega_k = \frac{\alpha_k}{\sum_{i=0}^{\emme}\alpha_i},
\end{equation}
where $I[P_k]$ denotes a suitable regularity indicator (e.g. the Jiang-Shu ones of eq. \eqref{eq:JiangShuInd}) evaluated on the polynomial $P_k$, $\epsilon$ is a small positive quantity and $t\geq2$;
\item and finally
\begin{equation}
\Prec(x) = \sum\limits_{k=0}^{\emme} \omega_k P_k(x) \in\Poly{G}
.
\end{equation}
\end{enumerate}
\end{definition}

Note that the polynomial $P_0\in\Poly{G}$ {\em is} part of the reconstruction, that $\CWENO$ provides a polynomial $\Prec$ that can be evaluated at any point within the cell, and that all coefficients $\omega_k$ 
involved in the reconstruction {\em do not} depend on the particular points where the reconstruction is needed.

\begin{remark}
In the case of reconstruction from cell averages, from the definition, it is trivial to check that, if all candidate polynomials satisfy the  conservation property
\[ \tfrac{1}{|\Omega|} \int_{\Omega}\Popt\dx = \tfrac{1}{|\Omega|} \int_{\Omega}P_k\dx = 
\ca{u}_{\Omega}
\]
for $k=1,\ldots,\emme$, then also $P_0$ and $\Prec$ have the same cell average:
\[ \tfrac{1}{|\Omega|} \int_{\Omega}P_0\dx = \tfrac{1}{|\Omega|} \int_{\Omega}\Prec\dx 
=\ca{u}_{\Omega}
.
\]
\end{remark}

\begin{remark}
The previous definitions may be cast in either one-dimensional or multi-dimensional settings. In the latter case $x=(x_1,\ldots,x_n)\in\R^n$ and $\Poly{g}$ denotes the space of polynomials in $n$ variables with degree at most $g$.
\end{remark}

Typically, in Finite Volume schemes, the optimal polynomial $\Popt$ is taken to be the polynomial interpolating all the data in the stencil of the reconstruction in the sense of cell averages.
For example in one space dimension, in each cell $\Omega_j$, the original CWENO construction of \cite{LPR:99}, is a third order accurate $\CWENO$ procedure with $\emme=2$, $\Popt=P^{(2)}$ the parabola defined on the centred 3-cell stencil $\Omega_{j-1},\Omega_j,\Omega_{j+1}$, and $P_1=P^{(1)}_L$, $P_2=P^{(1)}_R$ being the two linear polynomials interpolating the data in $\Omega_{j-1},\Omega_j$ and $\Omega_j,\Omega_{j+1}$ respectively. 
The same reconstruction was recently considered in a non-uniform mesh setting in \cite{PS:shentropy,CS:epsweno}. 

A fifth order version $\CWENO(P^{(4)},P^{(2)}_L,P^{(2)}_C,P^{(2)}_R)$ was proposed in \cite{Capdeville:08}, using a centred fourth degree polynomial interpolating the data in $\Omega_{j-2},\ldots,\Omega_{j+2}$ 
and the same three parabolas employed in the classical WENO5 scheme, namely those interpolating the data in $\Omega_{j-2+r},\Omega_{j-1+r},\Omega_{j+r}$ for $r=0,1,2$ respectively.

Along the same lines, in this paper we will introduce a seventh order reconstruction $\CWENO7=\CWENO(P^{(6)},P^{(3)}_{LL},P^{(3)}_L,P^{(3)}_R,P^{(3)}_{RR})$, where the optimal polynomial is the sixth order $P^{(6)}=\Popt$ interpolating the data in $\Omega_{j-3},\ldots,\Omega_{j+3}$ and $P_1=P^{(3)}_{LL}$, $P_2=P^{(3)}_L$, $P_3=P^{(3)}_R$, $P_4=P^{(3)}_{RR}$ are the third order polynomials interpolating $\ca{u}_{j-3+r},\ldots,\ca{u}_{j+r}$ for $r=0,1,2,3$.

Similarly, we will also propose the ninth order reconstruction $\CWENO9$ with $\emme=5$, $\Popt$ the eight order polynomial interpolating the data in $\Omega_{j-4},\ldots,\Omega_{j+4}$ and $P_1,\ldots,P_5$ are fourth order polynomials interpolating $\ca{u}_{j-4+r},\ldots,\ca{u}_{j+r}$ for $r=0,1,2,3,4$.

A few two-dimensional $\CWENO$ reconstructions can be found in the literature, including those of \cite{LPR:2001} where this technique was proposed and \cite{SCR:16} where it is generalised to non globally Cartesian grids.

\begin{remark}
We note that the coefficients $d_k$ appearing in Definition \ref{def:CWENO} do not need to satisfy accuracy requirements and they can be thus arbitrarily chosen, provided that they are positive and add up to $1$. A possible choice of coefficients is described just below.
\end{remark}

We start assigning weights to the low degree polynomials, biasing towards the central ones, because they would yield a smaller interpolation error. A reconstruction of order $2g+1$ is composed of $\emme=\grado+1$ polynomials of degree $\grado$. These are the $\emme$ polynomials which would compose a WENO reconstruction of the same order. Let $j=1,\dots,\emme$ be the indices of the low degree polynomials. We start computing temporary weights
\begin{equation} \label{eq:temporary:d}
\tilde{d}_j=\tilde{d}_{\emme+1-j}=j, \qquad \mbox{for}\; 1\leq j \leq \frac{\emme+1}{2}.
\end{equation}
Then we choose the linear coefficient $d_0\in(0,1)$ of the high order polynomial $P_0$. 
The final weights are given by
\[
d_j= \frac{\tilde{d}_j}{\sum_{i=1}^{\emme} \tilde{d}_i}(1-d_0).
\]

The value of $d_0$ must be bounded away from $0$ and from $1$. In fact, when $d_0$ is too close to $0$ the polynomial $P_0$ becomes unbounded. On the other hand, when $d_0$ is close to $1$, the reconstruction polynomial $\Prec$ will almost coincide with $\Popt$, irrespectively of the oscillation indicators.

In this paper we will mainly consider the two cases $d_0=\tfrac12$ and $d_0=\tfrac34$.
For instance, for $\CWENO5$, and $d_0=\tfrac34$, we have the left and right parabola with weight $d_1=d_3=\tfrac{1}{16}$ and $d_2=\tfrac18$.

\subsection{Implementation of the reconstruction in 1D}
The main task for computing a $\CWENO$ reconstruction efficiently is to optimise the computation of the coefficients of the interpolating polynomials.
In $\WENO$ the reconstruction is computed only at one point at a time and thus the Lagrange form of the interpolating polynomials is well suited to the task, see \cite{Shu97}. In contrast, here we need the functional representation of the polynomials and therefore it is more convenient to start from the Newton basis and finally get the representation of the polynomials in the basis of the monomials for the computation of the smoothness indicators.

Recall that $\ca{u}_j$ denotes the cell average of $u(x)$ on the generic cell $\Omega_j$ of the grid, which has size $h_j$. 
In order to compute the $\CWENO$ reconstruction in the $j$-th cell,
we need the explicit expression of the polynomial 
of degree $k$ that interpolates the cell averages $\ca{u}_{j-r},\ldots,\ca{u}_{j-r+k}$. Here $r$ denotes the offset of the stencil with respect to the $j$-th cell. Note that for a typical $\CWENO$ reconstruction one needs $\grado+1$ polynomials of degree $\grado$ with $r=0,\ldots,\grado$ and a polynomial of degree $G=2\grado$ with offset $r=\grado$. Note also that $\grado$ out of the $\grado+1$ polynomials of degree $\grado$ employed in the reconstruction for cell $\Omega_j$ are used also for the reconstruction in the cell $\Omega_{j+1}$, so that one needs to compute only one new polynomial per cell. 

It is thus convenient to compute all divided differences of the set of cell averages as a preprocessing stage to the computation of the reconstruction. In particular, denote the {\em divided differences} of the cell averages by
\begin{equation} \label{eq:divdiff}
\tilde{\delta}_{j,1} = \ca{u}_j,
\qquad
\tilde{\delta}_{j,p} = \frac{\tilde{\delta}_{j+1,p-1}-\tilde{\delta}_{j,p-1}}{\sum_{i=j}^{j+p-1}h_i}
\text{ for } p>1.
\end{equation}
For later convenience, let us introduce also the {\em undivided differences}
\begin{equation}\label{eq:undivdiff}
\delta_{j,p} = \left.\tilde{\delta}_{j,p}\right|_{\forall i: h_i=1}
,
\end{equation}
which are useful for computations on uniform grids.

Following \cite{Shu97} we note that a polynomial $p(x)$ of degree $k$ interpolating a set of consecutive cell averages can be easily computed by differentiating the polynomial $q(x)$ of degree $k+1$ that interpolate the quantities $S_i=\sum_{l\leq i}h_l\ca{u}_l$ in the interpolation nodes  $x_{i}+h_i/2$. It is easy to see that, for the sake of computing $p(x)$, the zero-th order term in $q(x)$ is not relevant. Thus the only divided differences that are needed are the ones listed in \eqref{eq:divdiff}.

From now on, let us focus on a reference cell $j=0$ and assume that its cell centre is at  $x_0=0$.
Let $p^{(k)}_r(x)$ be the degree $k$ polynomial with stencil offset $r$.
Applying the Newton interpolation, one finds that its primitive is
\begin{equation} \label{eq:q:Newton}
q^{(k+1)}_r(x) = \sum_{i=1}^{k+1} \tilde{\delta}_{-r,i} \prod_{l=0}^{i-1}(x-x_{-r-\um+l})
+\text{constant term} 
\end{equation}
and we write it in the basis of the monomials as
\begin{equation} \label{eq:q:monomial}
q^{(k+1)}_r(x) = 
\sum_{i=1}^{k+1} \tilde{\delta}_{-r,i} \sum_{m=1}^{i} \tilde{\gamma}^k_{r,i,m}x^m
+\text{constant term} 
\end{equation}
where $\tilde{\gamma}^k_{r,i,m}$ is the weight of the divided difference of order $i$ and offset $-r$ (i.e. $\tilde{\delta}_{-r,i}$) appearing into the coefficient of the monomial $x^m$.
Note that only the coefficients $\tilde{\gamma}^k_{r,i,m}$ for $m>0$ appear in the derivative of $q^{(k+1)}_r(x)$.
 By direct comparison of the last two equations one finds for the linear term that
\[
\tilde{\gamma}^k_{r,1,1}=1 ,
\qquad
\tilde{\gamma}^k_{r,i,1}
=(-1)^{i-1} \sum_{n=0}^{i-1}\prod_{\substack{l=0,\ldots,i-1\\l\neq n}} x_{l-r-\um}
,\;i>1
\]
and in general that
\begin{equation} \label{eq:tildegamma}
\begin{aligned}
\tilde{\gamma}^k_{r,i,m}
&=(-1)^{i-m}
\sum_{n_1=0}^{i-1}
\sum_{n_2=n_1+1}^{i-1}
\!\cdots \!\!\!\!\!
\sum_{n_m=n_{m-1}+1}^{i-1} 
\prod_{\substack{l=0,\ldots,i-1\\l\neq n_1,\ldots,n_m}} x_{l-r-\um} ,
\quad m<i
\\
\tilde{\gamma}^k_{r,m,m} &= 1,
\\
\tilde{\gamma}^k_{r,i,m} &= 0, \quad m>i
.
\end{aligned}
\end{equation}
%
%
%
Finally, the sought polynomial $p^{(k)}_r$ is found differentiating $q^{(k+1)}_r$:
\begin{equation} \label{eq:p:monomial}
p^{(k)}_r(x) = 
\sum_{i=1}^{k+1} \tilde{\delta}_{-r,i} 
  \sum_{m=1}^{i} \tilde{\Gamma}^k_{r,i,m}x^{m-1},
  \qquad
  \tilde{\Gamma}^k_{r,i,m} = m \tilde{\gamma}^k_{r,i,m}
.
\end{equation}
Note in particular that the values of $\tilde{\gamma}^k_{r,i,0}$ are not needed in the expression for $p^{(k)}_r(x)$.

Note that \eqref{eq:tildegamma} may be rewritten in terms of the cell sizes in the neighbourhood by exploiting the identity 
\[
x_{l-r-\um} = -\sum_{i=l-r}^{-1}h_i
+\sign(l-r)\frac{h_0}{2}
+\sum^{l-r-1}_{i=1}h_i,
\]
in which one of the two summations is always empty, depending on the sign of $l-r$.


Of course considerable simplifications occur on uniform grids, where one can write
\begin{equation} \label{eq:q:Newton:unif}
\begin{aligned}
q^{(k+1)}_r(x) &= \sum_{i=1}^{k+1} \tilde{\delta}_{-r,i}\prod_{l=0}^{i-1}(x-(-r-\um+l)h)
+\text{constant term} 
\\
&= \sum_{i=1}^{k+1} \delta_{-r,i} \prod_{l=0}^{i-1}(\tilde{x}-(-r-\um+l))
+\text{constant term} 
,
\end{aligned}
\end{equation}
where we recall that $\delta_{-r,i}$ are the undivided differences
and we have set $\tilde{x}=x/h$.
The above polynomial can be put in the form \eqref{eq:q:monomial} with
\begin{equation}\label{eq:tildegamma:unif}
\tilde{\gamma}^k_{r,i,m}
=(-h)^{i-m}
\sum_{n_1=0}^{i-1}
\;
\sum_{n_2=n_1+1}^{i-1}
\!\cdots \!\!\!\!\!
\sum_{n_m=n_{m-1}+1}^{i-1} 
\,
\prod_{\substack{l=0,\ldots,i-1\\l\neq n_1,\ldots,n_m}} (l-r-\um)
.
\end{equation}
An alternative  form is
\begin{equation} \label{eq:q:monomial:unif}
q^{(k+1)}_r(x) = 
\sum_{i=1}^{k+1} {\delta}_{-r,i} 
\sum_{m=1}^{i} {\gamma}^k_{r,i,m}x^m
+\text{constant term} 
\end{equation}
with
\begin{equation}\label{eq:tilde:unif}
{\gamma}^k_{r,i,m}
=(-1)^{i-m}
\sum_{n_1=0}^{i-1}
\;
\sum_{n_2=n_1+1}^{i-1}
\!\cdots \!\!\!\!\!
\sum_{n_m=n_{m-1}+1}^{i-1} 
\,
\prod_{\substack{l=0,\ldots,i-1\\l\neq n_1,\ldots,n_m}} (l-r-\um)
.
\end{equation}
Finally,
\begin{equation} \label{eq:p:monomial:unif}
p^{(k)}_r(x) = 
\sum_{i=1}^{k+1} {\delta}_{-r,i} 
  \sum_{m=0}^{i} {\Gamma}^k_{r,i,m}x^{m-1},
  \qquad
  {\Gamma}^k_{r,i,m} = m {\gamma}^k_{r,i,m}
\end{equation}

\begin{table}
$
\Gamma_{3,i,m} = 
\begin{bmatrix} 
1 & &   &   &    &  &   \\  
6 &   2  &  &  &   &   &  \\   
\nicefrac{71}{4} & 15&   3 &   &   &   &  \\ 
22& 43& 24&   4  &  &   &  \\   
-\nicefrac{71}{16}& \nicefrac{45}{2}&\nicefrac{105}{2}& 30&   5 &   &  \\   
\nicefrac{27}{8}&-\nicefrac{341}{8}&-45& 25& 30&   6 &     \\
-\nicefrac{225}{64}&\nicefrac{1813}{16}&\nicefrac{777}{16}&-\nicefrac{245}{2}&-\nicefrac{175}{4}& 21&   7  
\end{bmatrix}
$

$\Gamma_{2,i,m} =
\begin{bmatrix} 
1&&&&\\
4& 2& & &\\
\nicefrac{23}{4}& 9& 3&&\\
-1&7& 12& 4&\\
\nicefrac{9}{16}&-\nicefrac{25}{2}&-\nicefrac{15}{2}& 10& 5
\end{bmatrix}
$

$\Gamma_{1,i,m} =
\begin{bmatrix}
1& &&\\
2& 2&&\\ 
-\nicefrac{1}{4}& 3& 3&\\ 
0&-5& 0& 4
\end{bmatrix}$
\hfill
$\Gamma_{0,i,m} =
\begin{bmatrix}
1&&&\\
 0& 2&&\\ 
-\nicefrac{1}{4}&-3& 3&\\ 
 1& 7&-12& 4
\end{bmatrix}$

\caption{Table of the $\Gamma$ coefficients of \eqref{eq:p:monomial:unif} used in the computation of $\CWENO$ reconstructions up to order $7$ on uniform grids.}
\label{tab:Gamma}
\end{table}

In Table \ref{tab:Gamma} we list the values of the coefficients $\Gamma^k_{r,i,m}$ needed for the $\CWENO$ reconstructions up to order $7$. The coefficients for the higher order cases can be computed using \eqref{eq:tilde:unif} and \eqref{eq:p:monomial:unif}. 

If the final accuracy of the reconstruction is $2\grado+1$, we need the stencil $\Omega_{-\grado},\ldots,\Omega_\grado$. Here we must compute the polynomial $\Popt$ which has offset $\grado$ and contains monomials of degree $m$ up to $2\grado$ and all polynomials of degree $k=\grado$ with offset $r=0,\ldots,\grado$.
Note that the elements of $\Gamma^k_{r,i,m}$ are independent of $k$. 
Therefore they can all be stored in a matrix $\Gamma_{r,i,m}$
and the coefficients needed for the polynomial of degree $k$ with shift $r$ are in the top-left $(k+1)\times(k+1)$ submatrix of the matrix $\Gamma_{r,i,m}$ which are listed in Table  \ref{tab:Gamma}.

For example, for $\CWENO7$, we need all coefficients of $\Gamma_{3,i,m}$ reported in the table to build $\Popt$ and also the top $4\times 4$ submatrices from each $\Gamma_{r,i,m}$ (including $\Gamma_{3,i,m}$) to build the coefficients of the four cubic polynomials which compose the reconstruction.

\section{Analysis of the $\CWENO$ reconstruction in the smooth case}
\label{sec:anal:smooth}

This topic corresponds to point \ref{request:smooth} in the list of Summary \ref{summary:WENO}.
In order to perform the analysis of the $\CWENO$ reconstruction, let us focus on a fixed computational cell
$\Omega_0$ and assume that its cell centre is $x_0=0$. The $\CWENO$ procedure will be applied to the set of exact cell averages $\ca{u}_j$ of a given function $u(x)$. 
Let us assume that $\Popt\in\Poly{G}$ interpolates the cell averages of $\Omega_0$ and of a suitable number of neighbours, so that its approximation order is $\Ogrande(h^{G+1})$, if the function $u(x)$ is sufficiently regular. Furthermore the polynomials $P_r\in\Poly{\grado}$ are typically chosen to interpolate $\grado+1<G+1$ cell averages inside the stencil of $\Popt$ and their approximation order is $\Ogrande(h^{\grado+1})$. The reconstruction error at a point $x\in\Omega_0$ is thus given  by
\begin{equation}
\begin{aligned}
u(x)- \Prec(x) &=
u(x)- \Popt(x) + \sum_{r=0}^{\emme} (d_r-\omega_r)P_r(x)
\\
&=\underbrace{(u(x)-\Popt(x))}_{\Ogrande(h^{G+1})}
+ \sum_{r=0}^{\emme} 
(d_r-\omega_r)
\underbrace{(P_r(x)-u(x))}_{\Ogrande(h^{\grado+1})} 
\end{aligned}
\end{equation}
where the last equality is true since $\sum_{r=0}^{\emme}d_r=\sum_{r=0}^{\emme}\omega_r=1$. 
From the above formula it is then clear that the accuracy of the $\CWENO$ reconstruction equals the accuracy of its first argument $\Popt$ only if $(d_r-\omega_r)=\Ogrande(h^{G-\grado})$ in the case of smooth data, as in standard $\WENO$.

As we will see, $\CWENO$, exactly as $\WENO$, can be influenced by the chosen value chosen for $\epsilon$ in \eqref{eq:WENOweights} and \eqref{eq:OmegaFromD}. While obviously a value that is too large will promote the onset of spurious oscillations, a value that is too small may induce a degradation of the convergence order close to local extrema. This effect was first noticed in the $\WENO$ setting in \cite{HAP:2005:mappedWENO} and a technique consisting in a post-processing of the $\WENO$ weights known as $\WENO$-M was proposed in the same paper and later extended to higher order in \cite{FengHuangWang:14}. Another approach involving additional smoothness indicators, known as $\WENO$-Z has also been studied (see \cite{DB:2013} and references therein). In \cite{Arandiga} the authors devise a way to relate the value of $\epsilon$ to the mesh size in order to guarantee the correct convergence order and this technique has been extended to the $\CWENO$ setting in \cite{Kolb2014} for uniform meshes and exploited also on non-uniform meshes in one and two space dimensions, \cite{CS:epsweno} and \cite{SCR:16}, respectively.

For this reason we are mainly interested in the choice
\begin{equation} \label{eq:eps}
\epsilon = \hat{\epsilon}h^p
,
\text{ for } p=1,2
\end{equation}
where $h$ is the mesh size.

We state first a general result on the accuracy of the polynomial $P_0$ computed in step 1 of the $\CWENO$ reconstruction.
\begin{remark} \label{rem:P0accuracy}
$P_0$ is of degree $G$, but its accuracy is $g$:
\begin{align*}
P_0(x)-u(x) &= \frac{1}{d_0}\left[\Popt(x)-\sum_{r\geq1}d_rP_r(x) -d_0u(x)\right]
\\
&=\frac{1}{d_0}\left[\Popt(x)-\sum_{r\geq1}d_rP_r(x) -\left(1-\sum_{r\geq1}d_r\right)u(x)\right]
\\
&=\frac{1}{d_0}\bigg(\Popt(x)-u(x)\bigg)
  + \frac{\sum_{r\geq1}d_r}{d_0}\bigg(u(x)-P_r(x)\bigg)
.
\end{align*}
Thus the accuracy of $P_0$ will coincide with the smallest accuracy of the $P_r$'s.
\end{remark}

In order to prove that the accuracy of $\CWENO$ is $\Ogrande(h^{G+1})$ on smooth data, one has to show that $\omega_r- d_r$ is at least $\Ogrande(h^{G-g})$. This study can be performed extending to our case the technique introduced by \cite{Arandiga} in the case of $\WENO$ and which allows to rewrite $\omega_r-d_r$ in terms of differences among the indicators of the candidate polynomials. 

\begin{proposition}
The $\CWENO$ reconstruction with $\Popt$ of degree $G$ and $P_1,\ldots,P_{\emme}$ of degree $\grado$ is $G+1$ order accurate on smooth solutions, provided that $G \le  2\grado$ and 
$\epsilon=\hat{\epsilon}h^p$ with $p=1,2$.
\end{proposition}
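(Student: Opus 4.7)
The strategy is to reduce the accuracy claim to a uniform bound on $|\omega_r - d_r|$ and then adapt to the $\CWENO$ setting the Arandiga-type analysis originally developed for $\WENO$ in \cite{Arandiga} and extended to $\CWENO3$ in \cite{Kolb2014,CS:epsweno}. The error identity immediately preceding the proposition gives
\[
u(x) - \Prec(x) = \big(u(x) - \Popt(x)\big) + \sum_{r=0}^{\emme}(d_r - \omega_r)\big(P_r(x) - u(x)\big),
\]
whose first term is $\Ogrande(h^{G+1})$ by hypothesis. By Remark~\ref{rem:P0accuracy}, $P_0$ inherits the accuracy $g$ of the other candidates, so $P_r(x) - u(x) = \Ogrande(h^{g+1})$ uniformly in $r$. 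Hence it is enough to show that $|\omega_r - d_r| = \Ogrande(h^{G-g})$ for every $r$; under $G \leq 2g$ this reduces to proving $|\omega_r - d_r| = \Ogrande(h^g)$.

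Setting $\beta_k = (I[P_k] + \epsilon)^t$, a direct manipulation of \eqref{eq:OmegaFromD}, using $\sum_i d_i = 1$, yields the identity
\[
\omega_r - d_r = \frac{d_r}{\sum_i \alpha_i} \sum_{i \neq r} \frac{d_i\,(\beta_i - \beta_r)}{\beta_r \beta_i},
\]
to which the mean value theorem applies to give $|\beta_i - \beta_r| \leq t\,\max_k(I[P_k]+\epsilon)^{t-1}\,|I[P_i]-I[P_r]|$. The core task is therefore to estimate $|I[P_i]-I[P_r]|$. Using the explicit form \eqref{eq:JiangShuInd} of the Jiang--Shu indicator together with a Taylor expansion of $u$ around $x_0$, each $P_r$ (including $P_0$, via Remark~\ref{rem:P0accuracy}) has derivatives matching $u^{(l)}$ up to $\Ogrande(h^{g+1-l})$ for $l \leq g$; the derivatives of $P_0$ of order exceeding $g$ contribute only $\Ogrande(h^{2g+2})$ to $I[P_0]$ and are subdominant for $g \geq 1$. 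A Lipschitz argument in the spirit of Remark~\ref{rem:IndLip} then delivers $|I[P_i]-I[P_r]| = \Ogrande(h^{g+2})$. Inserting this into the identity, and writing $A = \max_k(I[P_k]+\epsilon)$ and noting $\sum_i \alpha_i \asymp A^{-t}$ on smooth data, one arrives at
\[
|\omega_r - d_r| \leq C\,\frac{h^{g+2}}{A}.
\]
With $\epsilon = \hat{\epsilon}h^p$, $p \in \{1,2\}$, at a point where $u'(x_0)\neq 0$ one has $A \asymp h^{\min(2,p)}$, producing $\Ogrande(h^g)$ for $p=2$ and $\Ogrande(h^{g+1})$ for $p=1$; both satisfy the required $\Ogrande(h^{G-g})$ since $G \leq 2g$.

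The main obstacle is extending this estimate to neighbourhoods of critical points of $u$, where the indicators themselves shrink below $h^2$ and the naive bound $A \asymp h^{\min(2,p)}$ via $I[P_k]$ alone fails. Here the role of $\epsilon$ becomes crucial: for $p \in \{1,2\}$ one still has $A \geq \hat{\epsilon}h^p$, while the Taylor-based estimate $|I[P_i]-I[P_r]| = \Ogrande(h^{g+2})$ continues to hold (and in fact tightens, since higher-order cancellations occur when the leading derivatives of $u$ vanish). Verifying this compensation carefully, keeping track of the contribution of the high-degree candidate $P_0$ throughout the sums, is the technical heart of the argument. The hypothesis $G \leq 2g$ is precisely the condition under which the accuracy of $\Popt$ does not outrun the cancellation budget available from degree-$g$ candidates; if $G > 2g$, the factor $h^{G-g}$ required from $\omega_r - d_r$ would exceed the $h^g$ (respectively $h^{g+1}$) actually produced by the Arandiga identity.
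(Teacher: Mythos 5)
Your proposal is correct and follows essentially the same route as the paper: reduce the claim to $\omega_r-d_r=\Ogrande(h^{G-g})$ via the error identity, show $I[P_i]-I[P_r]=\Ogrande(h^{g+2})$ by Taylor-expanding the Jiang--Shu indicators of all candidates (including $P_0$, whose coefficients up to degree $g$ agree with those of an interpolant while its higher-degree terms contribute only subdominantly), and divide by a denominator bounded below by $\epsilon=\hat\epsilon h^p$ to get $\omega_r-d_r=\Ogrande(h^{g+2-p})$, which suffices when $G\le 2g$. The only cosmetic difference is that you package the weight perturbation through the Arandiga difference identity for $\beta_k=(I[P_k]+\epsilon)^t$ plus the mean value theorem, whereas the paper expands $\alpha_r$ around $\alpha_0$ with a finite geometric sum; the resulting estimates are identical, and your closing discussion of critical points is exactly the role the factor $A\ge\hat\epsilon h^p$ plays in the paper's own computation.
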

\begin{proof}
The $\CWENO$ procedure starts by computing
\begin{equation} \label{eq:alpha:r}
\begin{aligned}
\alpha_{0} & = \frac{d_{0}}{(\epsilon + I[P_0])^t} \\
\alpha_{r} & = \frac{d_{r}}{(\epsilon + I[P_r])^t} =
\frac{d_{r}}{(\epsilon + I[P_0])^t} 
\left[ 1 + 
\frac{I[P_0] - I[P_r]}{ \epsilon  + I[P_r]} 
\sum_{s=0}^{t-1}
\left( \frac{ \epsilon + I[P_0]}{ \epsilon + I[P_r]} \right)^s \right]
, r=1,\ldots,\emme \\
\end{aligned}
\end{equation}

In order to proceed, we need the Taylor expansions of the differences between the indicators $I[P_r]$ for $r=0,\ldots,\emme$ and we focus on the classical Jiang-Shu indicators of \eqref{eq:JiangShuInd}.
First note that the Jiang-Shu indicator in terms of the  coefficients of a generic polynomial, centred in 0, is given by
\begin{equation} \label{eq:Iai}
 I\left[\sum_{i=0}^{\grado} a_i x^i\right]
 = 
 \sum_{l=1}^\grado 
 \sum_{j=l}^{\grado-1}
 \sum_{\substack{i=j, \\   i+j \ \text{even}}} ^\grado 
	\frac{j!i!}{(j-l)!(i-l)!}
	\frac{2^{2l+1-j-i-\delta_{i,j}}}{j+i-2l+1}
	 a_j a_i h^{j+i}
\end{equation}
where $\delta_{i,j}$ denotes the Kronecker delta.

If the polynomial $\sum_{i=0}^{\grado} a_i x^i$ of degree $\grado$ is interpolating the cell averages of a smooth enough function $u(x)$, then its coefficients satisfy
\begin{equation} \label{eq:ak:interpolating}
a_{i}= \frac{ 1 }{i!} u^{(i)}(0) + \Ogrande(h^{\grado-i+1}), \qquad i=0,1, \cdots, \grado.
\end{equation}
Note  that \eqref{eq:ak:interpolating} holds true also for the polynomial $P_0\in\Poly{G}$, but only for $i=0,\ldots,\grado$. In fact, letting $\Popt=\sum_{i=0}^G b_ix^i$ and $P_r=\sum_{i=0}^g a_{r,i} x^i$ and using the definition of $P_0$, one finds
\begin{equation} \label{eq:P0}
P_0(x) = \sum_{i=0}^G a_{0,i} x^i
= \sum_{i=0}^G \left(\frac{b_i}{d_0} - \sum_{r=1}^{\emme} a_{r,i}\frac{d_r}{d_0}\right) x^i
.
\end{equation}
Next, using \eqref{eq:ak:interpolating} for $\Popt$ and $P_r$ for $r=1,\ldots,\emme$ one gets
\[
a_{0,i}
=
\frac{1}{d_0 i!} 
\left(
\left(1- \sum_{r=1}^{\emme} d_r \right) u^{(i)}(0)
 + \Ogrande(h^{\grado-i+1})
\right)
\]
and finally 
\begin{equation} \label{eq:ak:P0}
a_{0,i}
=\frac{1}{i!} u^{(i)}(0)
 + \Ogrande(h^{\grado-i+1}),
 \qquad i=0,\ldots,\grado.
\end{equation}
It follows that, for $r=0,\ldots,\emme$,
\begin{equation} \label{eq:IPr}
I[P_r]
 = 
 \sum_{l=1}^\grado 
 \sum_{j=l}^{\grado-1}
 \sum_{\substack{i=j, \\ i+j<\grado+2, \\   i+j \ \text{even}}} ^\grado 
	\frac{{j!}}{(j-l)!(i-l)!}
	\frac{2^{2l+1-j-i-\delta_{i,j}}}{j+i-2l+1}
	 u^{(j)}(0) u^{(i)}(0) 
	 h^{j+i}
 +\Ogrande(h^{\grado+2})
.
\end{equation}
We now turn to the terms appearing in \eqref{eq:alpha:r}. 
Recalling \eqref{eq:eps} and
since \eqref{eq:IPr} implies that $I[P_0]-I[P_r]=\Ogrande(h^{g+2})$,
we  have  that
\begin{equation} \label{eq:sum:s}
\sum_{s=0}^{t-1}
\left(
\frac{ \epsilon + I[P_0]}{ \epsilon + I[P_r]} 
\right)^s
=
\sum_{s=0}^{t-1}
\left(
\frac{ \hat\epsilon h^p + I[P_0]}{ \hat\epsilon h^p + I[P_r]} 
\right)^s
=
t + \Ogrande(h^{\grado+2-p}).
\end{equation}
For the terms
\begin{equation}\label{eq:frazione}
\frac{I[P_0] - I[P_r]}{ \epsilon  + I[P_r]}
\end{equation}
we observe that \eqref{eq:IPr} holds true 
for all polynomials involved in the reconstruction and thus for the numerator we have that
\[
I[P_0]-I[P_r] = \Ogrande(h^{\grado+2})
.
\]
Instead, for the denominator of \eqref{eq:frazione}, 
we observe that \eqref{eq:Iai} implies that $I[P_r]=a_1^2h^2+\Ogrande(h^4)$ and,
recalling the choice of $\epsilon$ in \eqref{eq:eps}, we  find
\[ \hat \epsilon h^p + I[P_r] 
 =  Ah^p \left(1 
    +  \sum_{l=1}^\grado 
     \sum_{\substack{j=l\\j\neq p-1} }^{\grado-1}
     \sum_{\substack{i=j, \\   i+j \ \text{even}}} ^\grado
    	\frac{j!i!}{(j-l)!(i-l)!}
    	\frac{2^{2l+1-j-i-\delta_{i,j}}}{j+i-2l+1}
    	 \frac{a_j a_i}{A} h^{j+i-p}
      \right)
\]
where $A= \hat \epsilon $ if $p=1$ and $A= \hat \epsilon + a_1^2$ if $p=2$. 
Now
 $$ 
 \frac{ 1 }{\hat \epsilon h^p + I[P_r]} 
 =  \frac{ 1 }{A h^p} 
    \left( 1 
    +\Ogrande(h^p)
     \right) 
    $$
so that
 $$ \frac{I[P_0] - I[P_r]}{\epsilon + I[P_r]} 
 = \frac{\Ogrande(h^{g+2})}{A h^p}  \left( 1 + \Ogrande(h^p) \right) 
 = \Ogrande(h^{g+2-p}) $$  

Recalling \eqref{eq:sum:s} and \eqref{eq:alpha:r}, we have 
\begin{align*}
\alpha_{r} 
&= 
\frac{1}{(\epsilon + I[P_0])^t} 
\left[ d_r + 
d_{r} 
\frac{I[P_0] - I[P_r]}{ \epsilon  + I[P_r]} 
\sum_{s=0}^{t-1}
\left( \frac{ \epsilon + I[P_0]}{ \epsilon + I[P_r]} \right)^s \right]
\\
&= 
\frac{1}{(\epsilon + I[P_0])^t} 
\left[ d_r + 
\Ogrande(h^{g+2-p})
\left( t+ \Ogrande(h^{g+2-p}) \right)
\right]
\\
&= 
\frac{1}{(\epsilon + I[P_0])^t} 
\left[ d_r + 
\Ogrande(h^{g+2-p}) \right]
\end{align*}
and thus
\[
\left(\sum_{s=0}^{\emme} \alpha_{s} \right)^{-1} 
=(\epsilon + I[P_0])^t
\left[ \sum_{s=0}^{\emme}d_s + 
\Ogrande(h^{g+2-p}) \right]
=(\epsilon + I[P_0])^t
\left[ 1 + 
\Ogrande(h^{g+2-p}) \right]
.
\]
Finally, using \eqref{eq:alpha:r} and the previous relation we have
\begin{equation} \label{eq:omega:r}
\omega_{r} = \frac{\alpha_r}{\sum_{s=0}^{\emme} \alpha_s}
=d_{r}
 \left[
1 +\Ogrande(h^{g+2-p}) \right]
.
\end{equation}
Equation \eqref{eq:omega:r} shows that $\omega_k-d_k = \Ogrande(h^{\grado+2-p})$ and thus the accuracy is maximal provided that $\grado+2-p \ge G-\grado$.
\end{proof}

We point out that, starting from \eqref{eq:ak:interpolating}, all expressions hold in the limit $h\to0$. Obviously, for finite values of $h$, the behaviour of the reconstruction is determined by the relative size of $\hat{\epsilon}h^p$ and  the indicators. Especially in the case $p=0$, when $\hat{\epsilon}$ is too small with respect to $h$, one typically observes a degradation in the convergence rate. On the other hand, if $\hat{\epsilon}$ is too large, one might observe spurious oscillations, since $\hat{\epsilon}$ would override the indicators.

Another case where the size of $\epsilon$ can change the behaviour of the reconstruction is close to a local extremum. It typically happens that the local extremum does not lie in the stencil of all $P_r$'s. Suppose that an extremum is located only in the stencil of $P_{\hat{r}}$ for some $\hat{r}\in\{1,\ldots,\emme\}$. 

In this case a more refined analysis would replace (29) by the Taylor expansions of $I[P_r]$ centred in the middle of the respective stencils and get $I[P_{\hat{r}}]=\Ogrande(h^4)$ while the remaining smoothness indicators would be larger, and this would induce the scheme into selecting only the $\hat{r}$-th stencil, thus degrading accuracy. For this reason, it is important that epsilon is large enough to override the selection of stencils containing extrema, in the smooth case.
For this reason we suggest to employ $\epsilon\approx h^2$ or even $\epsilon\approx h$, as in
\cite{CS:epsweno,SCR:16}.

\section{Analysis in the discontinuous case}
\label{sec:anal:disc}

This section contains a discussion of the behaviour of $\CWENO$ in the case of discontinuous data. While the discussion of the previous section on the smooth case extends partial contributions of previous authors to reconstructions of arbitrary order of accuracy, the discontinuous case, to the best of our knowledge,  has never been analysed in details. In this section we will consider $\CWENO$ as an interpolation algorithm of a known function $u(x)$. We will thus suppose that it is possible to choose the mesh size to ensure that at most one discontinuity is present in the stencil of $\Popt$.

If a discontinuity is present in the stencil of $\Popt$, then the reconstruction is expected to degrade to a combination of the $P_k$'s whose stencil lie in smooth regions. In this respect, the reconstruction behaves as $\WENO$. In the $\WENO$ setting, this fact is almost trivial: only the $P_k$'s contribute to the reconstruction and they are all interpolating polynomials, thus the behaviour of their indicators matches exactly the presence or absence of a discontinuity in the corresponding stencil. 

In the $\CWENO$ setting, the same final result can be proven only if an additional property is verified by the indicators. In fact, in $\CWENO$, also the high order polynomial $P_0$ contributes non trivially to $\Prec$ and thus the behaviour of its indicator should be taken into account as well. However, $P_0$ is not an interpolating polynomial and thus, for the correct behaviour of the reconstruction in the discontinuous case, it is important that the following holds.
\begin{definition}[{\bf Property R}] \label{propertyI0}
We say that a reconstruction $\CWENO(\Popt,P_1,\ldots,P_{\emme})$ satisfies Property R if, whenever a jump-discontinuity is present in the stencil, so that 
$I[\Popt]\asymp 1$ for $h\to 0$, then also $I[P_0]\asymp 1$. 
\end{definition}

We will later prove that Property R holds for all the one-dimensional reconstructions considered in this paper. Here we show a general result of the impact of Property R 
on the behaviour of $\CWENO$ on discontinuous data.

\begin{theorem}\label{th:CWENOdisc}
Assume that Property R holds true for a $\CWENO$ procedure and that $\epsilon=\Ogrande(h)$. If the reconstruction is applied to discontinuous data, but at least one of $I[P_1],\ldots,I[P_{\emme}]$ is of size $\Ogrande(h^2)$, then $\omega_k\sim0$ for every $k\in\{0,\ldots,\emme\}$ such that $I[P_k]\asymp 1$.
\end{theorem}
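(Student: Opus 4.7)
The plan is to compare the $\alpha$-values from the CWENO weighting formula \eqref{eq:OmegaFromD}. First, I would invoke Property R: because the data is discontinuous in the stencil of $\Popt$, one has $I[\Popt]\asymp 1$, and Definition \ref{propertyI0} then yields $I[P_0]\asymp 1$. Thus the set of ``bad'' indices with $I[P_k]\asymp 1$ to which the conclusion applies may contain $k=0$ together with those $k\ge 1$ whose substencil crosses the discontinuity.

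Next, fix an index $k^{*}\in\{1,\ldots,\emme\}$ with $I[P_{k^{*}}]=\Ogrande(h^{2})$, which exists by hypothesis. Since $\epsilon=\Ogrande(h)$, we have $\epsilon+I[P_{k^{*}}]=\Ogrande(h)$, so there is a constant $C_{1}>0$ such that
\[
\alpha_{k^{*}} = \frac{d_{k^{*}}}{(\epsilon+I[P_{k^{*}}])^{t}} \ge \frac{C_{1}}{h^{t}}.
\]
On the other hand, for any bad index $k$ (with $I[P_{k}]\asymp 1$), the denominator $(\epsilon+I[P_{k}])^{t}$ is bounded above and below by positive constants, so $\alpha_{k}=\Ogrande(1)$.

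Summing then gives $\sum_{j=0}^{\emme}\alpha_{j}\ge\alpha_{k^{*}}\ge C_{1}h^{-t}$, and normalising yields
\[
\omega_{k} = \frac{\alpha_{k}}{\sum_{j=0}^{\emme}\alpha_{j}} \le \frac{\Ogrande(1)}{C_{1}h^{-t}} = \Ogrande(h^{t})
\]
for every bad index $k\in\{0,\ldots,\emme\}$; since $t\ge 2$, this implies $\omega_{k}\to 0$ as $h\to 0$, which is the claim.

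The only conceptually delicate point is the role of Property R. Without it, the high-degree polynomial $P_{0}$, which is not itself an interpolant but is built from $\Popt$ and the lower-degree $P_{k}$'s through step 1 of Definition \ref{def:CWENO}, could in principle have a small indicator in the presence of a discontinuity, since cancellations between the terms combining to form $P_{0}$ might hide the jump. Property R is precisely the hypothesis that rules out such pathologies and forces $\omega_{0}$ to vanish as well. Consequently, the substantive work in the analysis of the discontinuous case lies in verifying Property R for the specific CWENO constructions of interest; the proof of Theorem \ref{th:CWENOdisc} itself reduces to the short comparison of $\alpha$'s above.
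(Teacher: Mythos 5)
Your proof is correct and follows essentially the same route as the paper's: Property R places $k=0$ among the indices with $I[P_k]\asymp 1$, and the comparison of $\alpha_{k^*}\geq C_1 h^{-t}$ for a smooth substencil against $\alpha_k=\Ogrande(1)$ for the bad indices gives $\omega_k=\Ogrande(h^t)\to 0$ after normalisation, exactly as in the paper (which states the bound as $\Ogrande(h^2)$ using $t\geq 2$).
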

\begin{proof}
Since the data are discontinuous, then $I[\Popt]\asymp 1$ and, thanks to Property R, also  $I[P_0]\asymp 1$.
Let $K$ be the set $\{k:I[P_k]\asymp 1\}$.
Then the hypothesis guarantees that there exists at least one $l\not\in K$ for which $I[P_l]=\Ogrande(h^2)$.
Therefore $\alpha_l$ is at least of magnitude $h^{-2}$ and thus from \eqref{eq:OmegaFromD} we find that $\omega_l\asymp1$ and  $\omega_k=\Ogrande(h^2)$ for every $k\in K$.
%
\end{proof}

As a corollary, provided that at least one of $P_1,\ldots,P_{\emme}$ insists on a smooth stencil, the reconstruction degrades to a combination of the $P_k$'s insisting on smooth stencils and thus will be Essentially Not Oscillatory. With reference to Summary \ref{summary:WENO}, Property R 
corresponds to point \ref{request:I} and Theorem \ref{th:CWENOdisc} to point \ref{request:disc}.

Notice that Property R 
is not trivial, despite the fact that $P_0$ is a convex combination of the interpolating polynomials $\Popt$ and of all the $P_k$'s.
In fact, at least for the Jiang-Shu indicators, the square inside the integrals in equation \eqref{eq:JiangShuInd} mixes in a nonlinear way the contributions of all the polynomials involved.
For example, consider $\emme=1$, where we have $P_0=\alpha \Popt+(1-\alpha)P_1$ (for $\alpha=1/d_0$) and
\begin{equation}\label{eq:IndOfConvexComb}
I[P_0] = 
\alpha^2 I[\Popt] 
+(1-\alpha)^2 I_{P_1} 
+\alpha(1-\alpha) \sum_{l\geq1} h^{2l-1} 
\int_{\Omega} \left(\tfrac{\mathrm{d}^l}{\mathrm{d}x^l}\Popt\right)  
\left(\tfrac{\mathrm{d}^l}{\mathrm{d}x^l}P_1\right)\dx.
\end{equation}
In the formula above, $I[\Popt]$ and $I[P_1]$ are always non-negative, but there is no way to control the sign of the cross terms. 

We start by showing direct computations regarding property R for the third order $\CWENO$ reconstruction of \cite{LPR:99}, but for generic $d_0\in(0,1)$. We recall that in this case the stencil consists of the three cells $\Omega_{j+l}, l=-1,0,1$, $P^{(2)}=\Popt \in \Poly{2}$ is the parabola interpolating in the sense of cell averages a given function $u(x)$ on the whole stencil, while $P^{(1)}_L$ and $P^{(1)}_R$ are the two left and right linear functions interpolating the cell averages $\ca{u}_{j-1}, \ca{u}_j$ and $\ca{u}_{j}, \ca{u}_{j+1}$, respectively.

\begin{example}\label{prop:CWENO3:disc}
Consider the operator $\CWENO(P^{(2)},P^{(1)}_L,P^{(1)}_R)$, with $d_L=d_R$ as defined by \cite{LPR:99}, and apply it to the cell averages of a Heaviside function and in particular to
\[
\ca{u}_{j-1}=1
\qquad
\ca{u}_{j}=0
\qquad
\ca{u}_{j+1}=0 
.
\]
By direct computation one finds that
\begin{equation} \label{eq:CW3disc:ratio}
\frac{I[P_0]}{I[P^{(2)}]}
=
\frac{3d_0^2-6d_0+16}
{16d_0^2}.
\end{equation}
Recalling that we are interested only in the domain $d_0\in(0,1]$,
since the derivative of \eqref{eq:CW3disc:ratio} vanishes at $d_0=16/3$, this expression attains its minimum on the boundary and precisely at $d_0=1$, where it attains the value $13/16$. Moreover, this ratio is clearly continuous provided $d_0\geq \delta>0$. Thus we have that for every choice of $0<\delta \leq d_0\leq1$, 
 $I[P_0]\asymp 1$ whenever $I[P^{(2)}] \asymp 1$.
\end{example}

We now turn to the general case, showing that Property R is verified by all one-dimensional $\CWENO$ reconstruction procedures with $d_0\neq0$.

\begin{theorem}
	Let $\CWENO(\Popt,P_1,\dots,P_{\emme})$ be a reconstruction with $\Popt\in\Poly{G}$ and $P_k$ of degree at most $g<G$ for all $k=1,\ldots,\emme$, with $d_0\geq\delta>0$. If a jump discontinuity is present in the stencil of the reconstruction polynomial, then  $I[P_0] \asymp 1$.
\end{theorem}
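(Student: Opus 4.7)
The plan is to argue by contradiction via a rescaling and compactness argument. First, I would rescale to a reference cell of unit size via $\tilde x = x/h$, so that the Jiang--Shu indicator becomes scale-invariant, $I[P]=\tilde I[\tilde P]$, and the rescaled polynomial coefficients of $\tilde\Popt$, $\tilde P_1,\ldots,\tilde P_{\emme}$ are uniformly bounded (since $u$ is bounded on the stencil, so are its cell averages, and hence the coefficients of every interpolant). This also gives an immediate upper bound of the form $I[P_0]\leq C$, so the only work is the lower bound.

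Next, for any sequence $h_n\to 0$ for which $I[\Popt]\asymp 1$ holds, I would extract a subsequence along which the rescaled cell averages converge; then the corresponding rescaled polynomials $\tilde\Popt^{(n)}\to\tilde\Popt^*$, $\tilde P_k^{(n)}\to\tilde P_k^*$, $\tilde P_0^{(n)}\to\tilde P_0^*$ (convergence of coefficients), and $\tilde I$ is continuous in the coefficients. Because a jump of fixed non-zero size is present in the stencil, the limiting cell averages cannot all be equal, and so $\tilde\Popt^*$ is non-constant, giving $\tilde I[\tilde\Popt^*]>0$.

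The core algebraic step is now short. Suppose, for contradiction, $\tilde I[\tilde P_0^*]=0$, i.e.\ $\tilde P_0^*$ is constant. Rearranging Definition~\ref{def:CWENO} yields the identity
\[
\Popt = d_0 P_0 + \sum_{k=1}^{\emme} d_k P_k,
\]
which passes to the limit. Its right-hand side now lies in $\Poly{g}$, because $\tilde P_0^*$ is constant and every $\tilde P_k^*$ has degree at most $g$; hence $\tilde\Popt^*\in\Poly{g}$. But then $\tilde\Popt^*$ is a degree-$\leq g$ polynomial that matches the cell averages used to define each $\tilde P_k^*$ on its $(g{+}1)$-cell substencil, so by uniqueness of the cell-average interpolant in $\Poly{g}$ on $g+1$ cells, $\tilde P_k^* = \tilde\Popt^*$ for every $k$. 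Substituting back gives
\[
d_0\tilde P_0^* \;=\; \tilde\Popt^* - \sum_{k=1}^{\emme}d_k\tilde P_k^* \;=\; \tilde\Popt^* - (1-d_0)\tilde\Popt^* \;=\; d_0\tilde\Popt^*,
\]
i.e.\ $\tilde P_0^* = \tilde\Popt^*$. Since $\tilde P_0^*$ is constant and $\tilde\Popt^*$ is not, this is a contradiction, so $\tilde I[\tilde P_0^*]>0$ on every limit. Using $d_0\geq\delta>0$ to keep the rescaling from degenerating, this yields a uniform positive lower bound, and combined with the upper bound above gives $I[P_0]\asymp 1$.

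The main obstacle I anticipate is making the rescaling and compactness step genuinely uniform on non-uniform grids: the symbol $h\to 0$ must refer to a single mesh parameter, and one needs bounded ratios between neighbouring cell widths so that the limits $\tilde\Popt^*,\tilde P_k^*$ are well-defined polynomials on a fixed set of reference cells. The algebraic part — the degree-count in $\Popt=d_0P_0+\sum d_kP_k$ combined with uniqueness of low-degree cell-average interpolation — is the conceptual heart and, once the limit framework is in place, is essentially a one-line argument.
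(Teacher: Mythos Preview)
Your argument is correct for the standard one-dimensional $\CWENO$ setup, but it is considerably more indirect than the paper's. The paper exploits a one-line fact you did not use: since every $P_k$ has degree at most $g<G$, the $G$-th derivative of $P_0 = \tfrac{1}{d_0}\bigl(\Popt - \sum_k d_k P_k\bigr)$ is simply $\tfrac{1}{d_0}$ times the $G$-th derivative of $\Popt$. Bounding $I[P_0]$ from below by its top term $h^{2G-1}\int_\Omega (P_0^{(G)})^2\,dx$ then reduces everything to showing that the leading coefficient of $\Popt$, which is a fixed multiple of the highest divided difference of the cell averages, behaves like $C/h^G$ across a jump. This is a direct computation with no compactness or limit extraction, and it sidesteps precisely the non-uniform-grid uniformity issue you flagged.

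Your route additionally requires that each $P_k$ be the cell-average interpolant on a $(g{+}1)$-cell substencil (needed to invoke uniqueness and conclude $\tilde P_k^*=\tilde\Popt^*$), whereas the paper's proof uses only $\deg P_k < G$. Moreover, the contradiction you reach, that $\tilde\Popt^*\in\Poly{g}$, already says its degree-$G$ coefficient vanishes; this could be refuted immediately by the divided-difference argument without the detour through $\tilde P_k^*=\tilde\Popt^*$ and $\tilde P_0^*=\tilde\Popt^*$. Both proofs ultimately rest on the same degree count in $\Popt = d_0 P_0 + \sum_k d_k P_k$, but the paper extracts the conclusion from it in one step rather than three.
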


\begin{proof}
	Since $I[P_0]$ is bounded for $h\to0$ by definition \eqref{eq:JiangShuInd}, 
	in order to prove the statement we verify that $I[P_0]$ is larger than a quantity of order $h^0$. From the definition of the Jiang-Shu indicators \eqref{eq:JiangShuInd}, we notice that
	\[
		I[P_0] = \sum_{l=1}^{G} h^{2l-1} \int_{\Omega} \left( \frac{\mathrm{d}^l}{\mathrm{d}x^l} P_0 \right)^2 \mathrm{d}x > h^{2G-1} \int_{\Omega} \left( \frac{\mathrm{d}^G}{\mathrm{d}x^G} P_0 \right)^2 \mathrm{d}x
		.
	\]
	Using \eqref{eq:P0}, the $G$-th derivative of $P_0$ becomes
	\[
		\frac{\mathrm{d}^G}{\mathrm{d}x^G} P_0 
		= G! \frac{b_G}{d_0}
		=  \frac{(G+1)!}{d_0} \tilde{\delta}_{-g,G+1}
		,
	\]
	where  the leading coefficient $b_G$ of the optimal polynomial $\Popt$ has been computed as follows. Since $\Popt$ is an interpolant polynomial, using equation \eqref{eq:p:monomial} for $k=G$ and \eqref{eq:tildegamma}, we get
	\[
		b_G = \tilde{\delta}_{-g,G+1} \tilde{\Gamma}_{g,G+1,G+1}^G
		=  (G+1) \tilde{\delta}_{-g,G+1}
		.
	\]
	The $G$-th derivative of $P_0$ thus contains only the highest order divided difference of the optimal polynomial $\Popt$, which, in case of a discontinuity, diverges at a rate $h^{-G}$. In fact, one can find
	\[
		\tilde{\delta}_{-g,G+1} 
		= \frac{\sum_{i=0}^G (-1)^i \binom{G}{i} \ca{u}_{-g+i}}{(G+1)! h^G}
		\sim \frac{C}{h^G}
		,
	\]
	where $C\neq 0$ can depend on the size of the jump but not on $h$.
	We can finally compute
	\begin{align*}
		I[P_0]
		> 
		h^{2G-1} \int_{\Omega} \left( \frac{(G+1)!}{d_0} \tilde{\delta}_{-g,G+1} \right)^2 \mathrm{d}x
		\sim \left(\frac{(G+1)!}{d_0}\right)^2 C^2
		,
	\end{align*}
which concludes the proof.
\end{proof}

\subsection{Discontinuity in the reconstruction cell}
We now turn to point \ref{request:discCx} of Summary \ref{summary:WENO}. Let us consider the case in which the reconstruction is sought for the cell averages of a function with a discontinuity located inside the central cell. 
Clearly in this case all stencils of the polynomials involved in the reconstruction contain the troubled cell.

Consider first the cell averages of $u(x)=H(x)+v(x)$ where $v(x)$ a Lipschitz continuous function and $H(x)$ is an Heaviside function with jump located in the reconstruction cell.
First note that, thanks to Remark \ref{rem:IndLip},
which implies that 
$I[P]=\left.I[P]\right|_{v\equiv0}+\Ogrande(h)$,
the reconstructed values will differ at most by $\Ogrande(h)$ from those that one would obtain in the case $v\equiv0$.

Without loss of generality we now consider the case in which $\ca{u}_j=1$ for $j<0$, $\ca{u}_0=D\in(0,1)$ and $\ca{u}_j=0$ for $j>0$. 

We compute the $\CWENO$ reconstruction for $D\in(0,1)$, $d_0\in(0,1]$ at a generic point $x$ in the central cell. For $\CWENO3$ we choose the remaining coefficients symmetric, i.e. $d_L=d_R=(1-d_0)/2$, as in Proposition \ref{prop:CWENO3:disc}. For $\CWENO5$ we have one more parameter and we take $d_L=d_R=d_C/2$, i.e. $d_C=(1-d_0)/2, d_L=d_R=(1-d_0)/4$. 
For $\CWENO7$ we again give more weight to the central stencils taking $d_L=d_R=(1-d_0)/3$ and $d_{LL}=d_{RR}=(1-d_0)/6$, see also \eqref{eq:temporary:d}.

We are thus left with the free parameters $D$ and $d_0$ and applying the reconstruction we obtain a function $U(x;D,d_0)$. From these data, we fix $d_0$ and we extract $m_{d_0}(D)=\min_xU(x;D,d_0)$ and $M_{d_0}(D)=\max_xU(x;D,d_0)$. Figure \ref{fig:discCentraleCWENO35} 
shows the plots of $m_{d_0}(D)$ and $M_{d_0}(D)$ for all schemes and for several values of $d_0$ which are typical, namely $d_0=\nicefrac12$ (often employed in the literature), $d_0=\nicefrac34$ (used in the numerical experiments of this paper), and $d_0=\nicefrac{9}{10}$ (which overweights the central polynomial). It is clear that for all values considered, the reconstructed data are bounded by $[0,1]$ for all values of $D$ and thus no spurious oscillations are created and the total variation remains bounded.

It is noteworthy that the functions $m_{d_0}(D)$ and $M_{d_0}(D)$ depend so weakly on $d_0$. Moreover, we found comparable results for other choices of the coefficients in $\CWENO5$ and $\CWENO7$. Obviously, for $d_0$ very close to $0$ or $1$, $m_{d_0}(D)$ and $M_{d_0}(D)$ would change significantly. However, taking extreme values for $d_0$ does not make sense in practice: for $d_0\to0$, $P_0$ becomes undefined, while the limit $d_0\to1$ leads to $\Prec\to\Popt$ irrespectively of the oscillation indicators.

\begin{figure}
\begin{tabular}{ccc}
\includegraphics[width=0.32\textwidth]{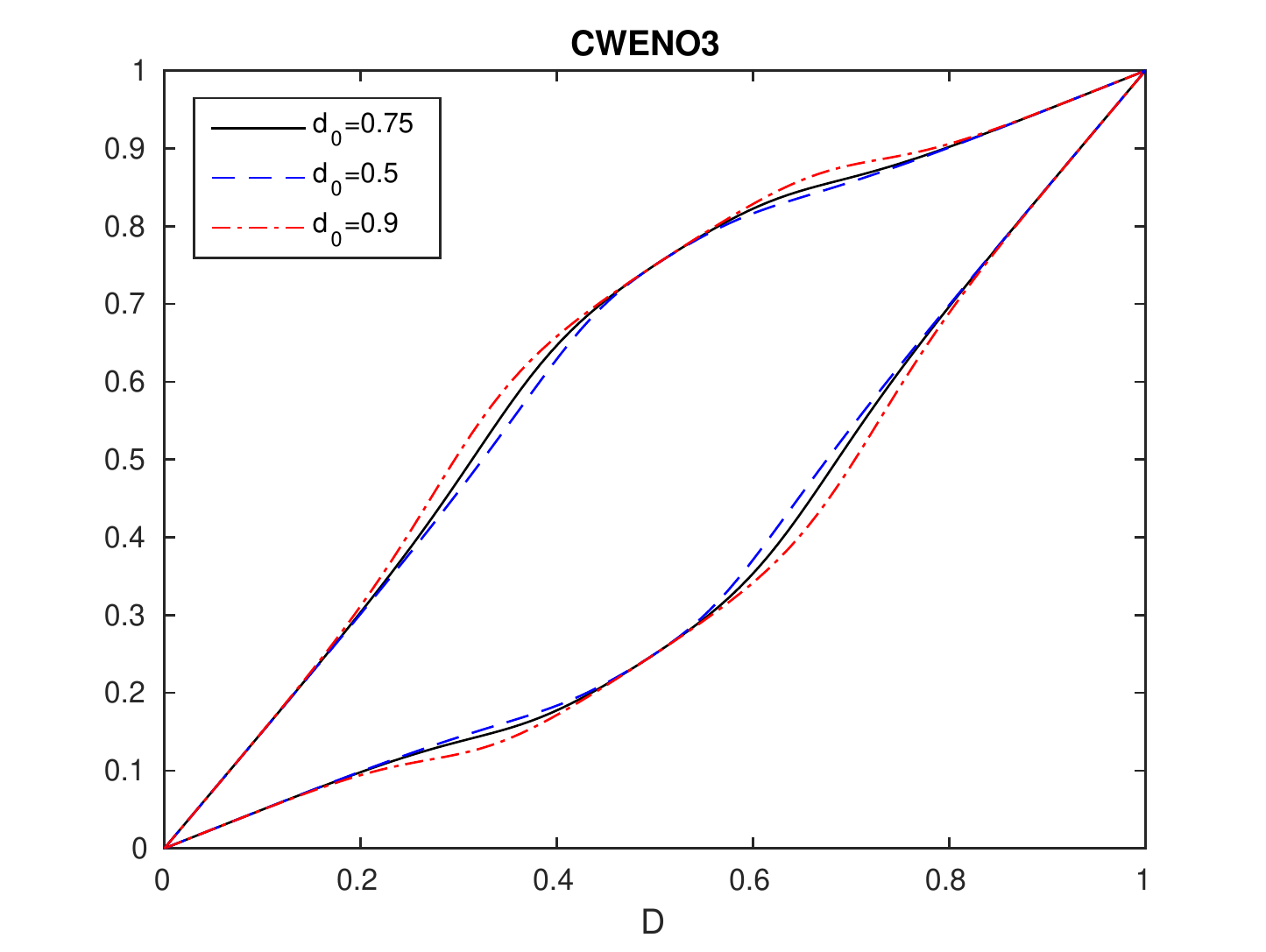}
&
\includegraphics[width=0.32\textwidth]{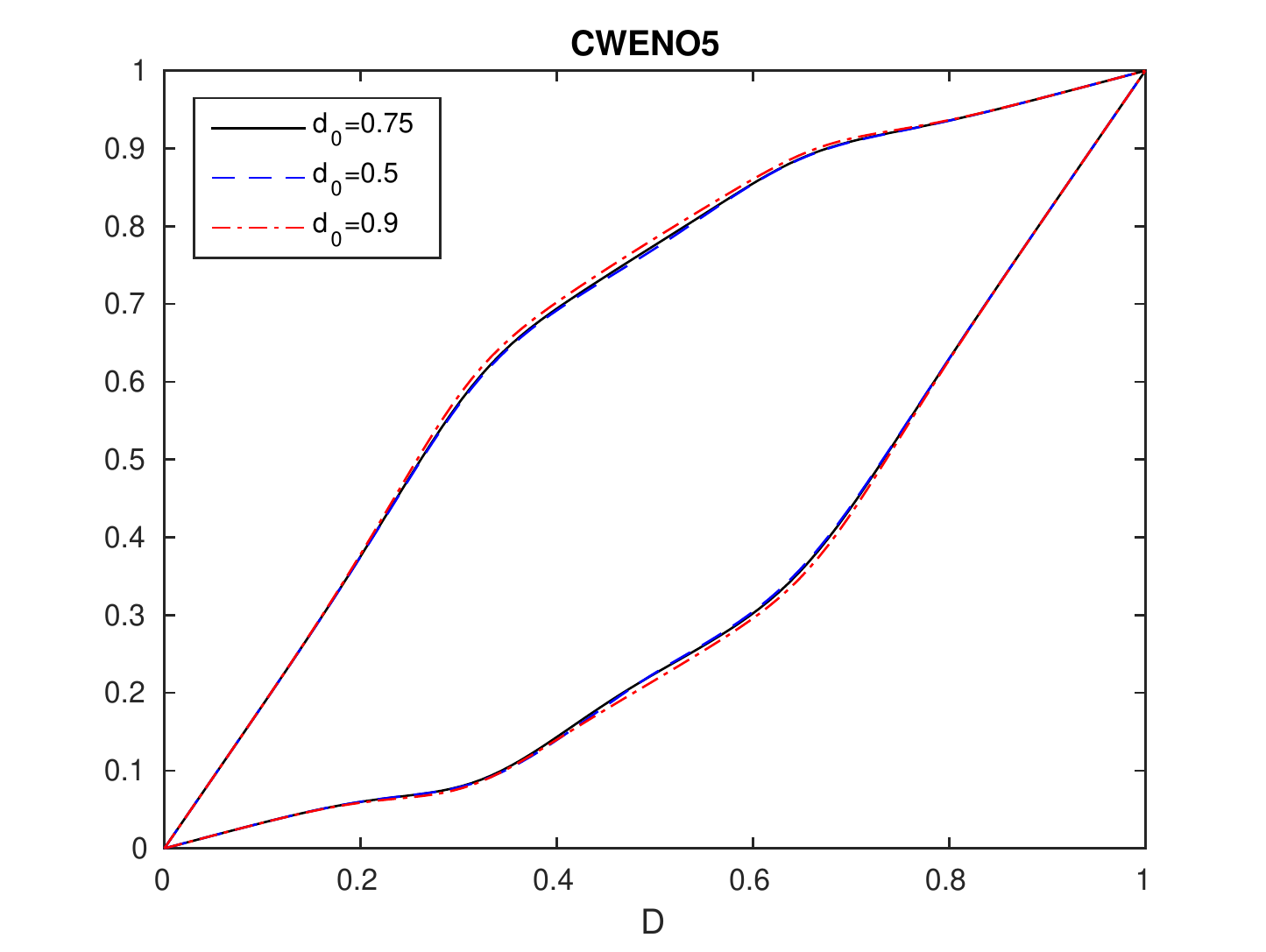}
&
\includegraphics[width=0.32\textwidth]{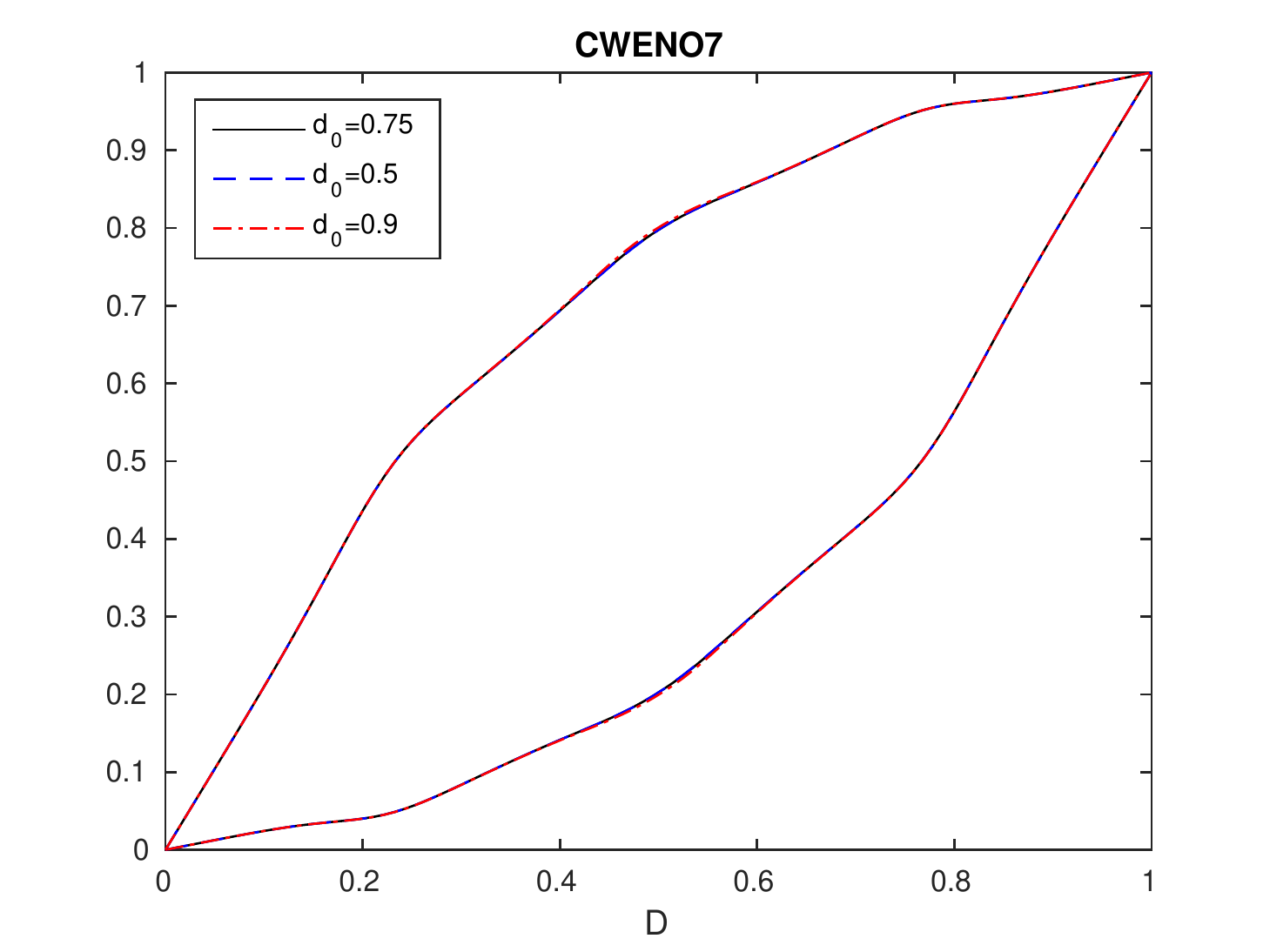}
\\
(a) & (b) & (c)
\end{tabular}
\caption{Discontinuity in the reconstruction cell. Minimum and maximum values attained by the reconstruction polynomial in the cell, as a function of the location $D$ of the discontinuity, for several values of $d_0$. Left: $\CWENO3$ with $d_L=d_R=(1-d_0)/2$. Middle: $\CWENO5$ with $d_C=(1-d_0)/2, d_L=d_R=(1-d_0)/4$. Right: $\CWENO7$ with $d_L=d_R=(1-d_0)/3, d_{LL}=d_{RR}=(1-d_0)/6$.}
\label{fig:discCentraleCWENO35}
\end{figure}

\section{Numerical experiments} \label{sec:numerical}

The purpose of the tests appearing in this section is to study the accuracy of the reconstructions proposed in this work, and to verify the non oscillatory properties of the resulting schemes. Thus we will consider the standard tests which are commonly used in the literature on high order methods for conservation laws: linear advection of smooth and non smooth waves, shock formation in Burgers' equation and Riemann problems for Euler gas dynamics. In all these cases, we will compare our results with solutions obtained with WENO schemes. Here, our results are comparable with standard WENO. 

Next, we will consider problems with sources, where our reconstructions are, we think, an improvement over standard WENO, because we easily evaluate the reconstructions at all quadrature points simultaneously. Again, we exhibit convergence histories and non oscillatory properties, using problems from shallow water and gas dynamics with source terms. Finally, we study the well balancing of the schemes built on the new reconstructions.

We construct numerical schemes applying the method of lines and the  Local Lax-Friedrichs flux with the $\CWENO3$, $\CWENO5$ and the newly proposed $\CWENO7$ and $\CWENO9$ reconstructions. The time integrators are Runge-Kutta schemes of matching order. In particular, the third order scheme employs the classical third order (strong stability preserving) SSP Runge-Kutta with three stages \cite{JiangShu:96}, the fifth order scheme the fifth order scheme with six stages of \cite[\S3.2.5]{Butcher:2008}, the scheme of order seven relies on the nine-stages scheme of \cite[pag 196]{Butcher:2008} and the scheme of order nine employs the scheme with eighteen stages of order ten of \cite{Curtis:1975}. Clearly, other Runge-Kutta or multistep schemes and different Riemann solvers could be used instead.

Source terms are integrated with a Gaussian quadrature formula  matching the order of the scheme when well-balancing is not an issue. In the case of the shallow water equations, we employ a scheme which is well-balanced for the lake at rest solution, constructed with the hydrostatic reconstruction technique of \cite{Audusse:2004}, the desingularization procedure proposed in \cite{Kur:desing} and the Richardson extrapolation for the quadrature of the source term. With reference to the latter, we employ the following quadratures $S^{(q)}$ of order $q$
\begin{align*}
S^{(4)} &= (4 S_2-S_1)/3\\
S^{(6)} &= (64S_4-20S_2+S_1)/45\\
S^{(8)} &= ( 4096 S_8-1344 S_4+84 S_2 -S_1 )/2835\\
S^{(10)} &=  1.450463049417298 S_{16}
			-0.481599059376837 S_8
			+ 0.031604938271605 S_4  \\
		& -0.000470311581423 S_2
			+0.000001383269357 S_1
,
\end{align*}
where $S_n$ denotes the quadrature of the source term computed with the composite trapezoidal rule with $n$ intervals on each cell. The first of these formulas was published in \cite{NatvigEtAl} and the other ones were derived by us following the ideas of that paper.

\subsection{Schemes for conservation laws}
In conservation laws, finite volume schemes on a fixed grid need reconstruction algorithms only to evaluate the numerical solution at the boundary of a cell. These data are used by the numerical fluxes to approximately solve local generalised Riemann Problems.

\begin{test}{Linear transport of smooth data, low frequency case.} \label{t:1} \end{test}

The convergence rates appearing in Fig. \ref{f:acc1} are obtained using an initial condition from\cite{Arandiga}. We solve $u_t+u_x=0$, on $[-1,1]$ with periodic boundary conditions, up to $T=2$, with initial condition
\[
u_0(x) = \sin\left( \pi x - \frac{1}{\pi} \sin(\pi x)\right).
\]

\begin{figure}\label{f:acc1}
\includegraphics[width=0.8\textwidth]{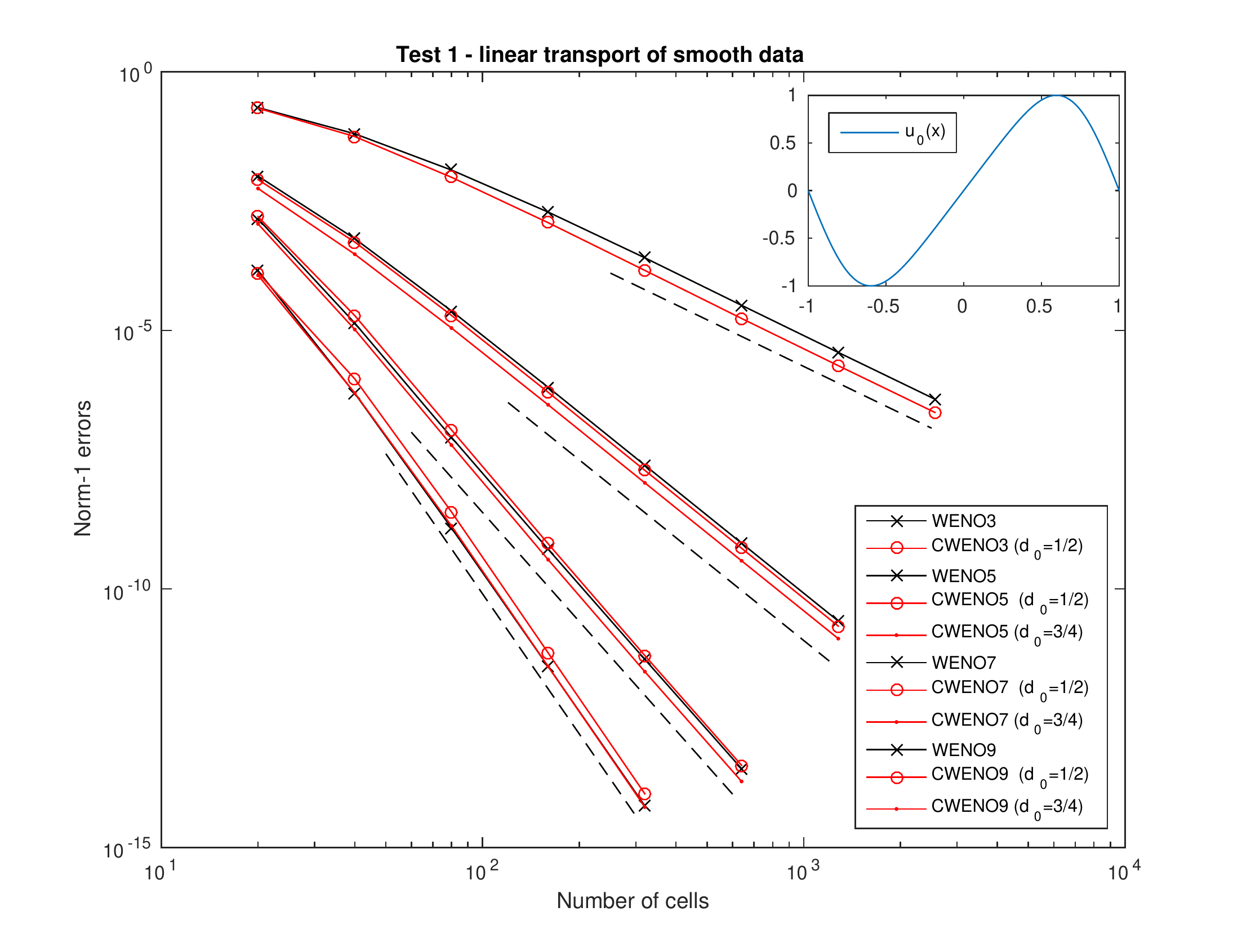}
\caption{Convergence rates for CWENO and WENO schemes of order 3, 5, 7 and 9, Test \ref{t:1}.} 
\end{figure}

The low order CWENO3 scheme has $d_0=\tfrac12$, while for the higher order schemes we show results with $d_0=\tfrac12$ (empty circles) and $d_0=\tfrac34$ (dots). Each group of curves is characterised with the desired slope (3, 5, 7 and 9 respectively, dashed black lines). The black solid curves are the reference results, obtained with the classical WENO scheme of the same order. Note that in all cases the errors almost coincide, with a very slight edge for the CWENO schemes with $d_0=\tfrac34$.

\begin{test}{Linear transport of smooth data, high frequency case.} \label{t:2} \end{test}

\begin{figure}\label{f:acc1}
\includegraphics[width=0.8\textwidth]{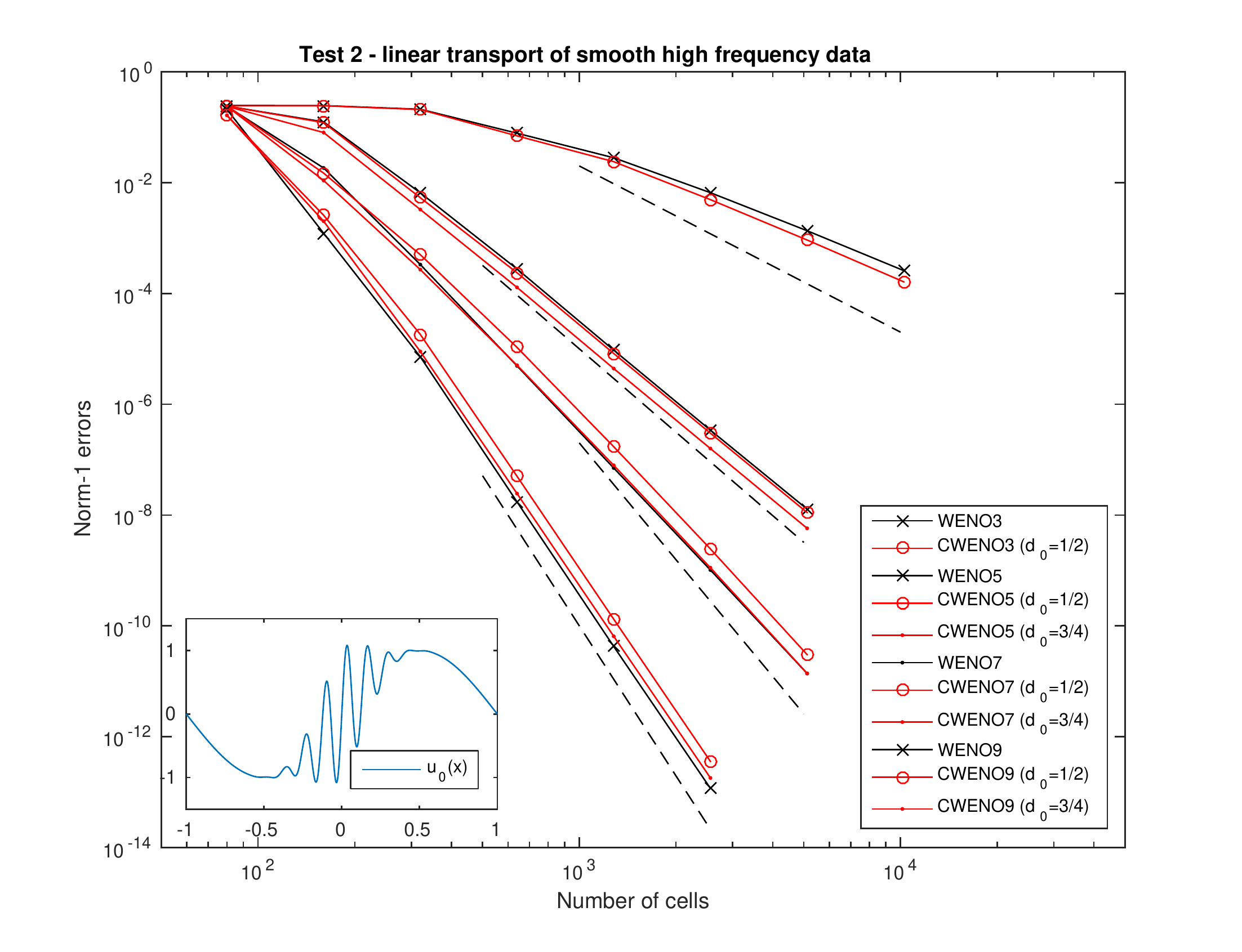}
\caption{Convergence rates for CWENO and WENO schemes of order 3, 5, 7 and 9, Test \ref{t:2}.} 
\end{figure}

This test is drawn from \cite{SCR:16}. It studies the propagation of a sine wave with a localised high frequency perturbation.
As before, we solve $u_t+u_x=0$, on $[-1,1]$ with periodic boundary conditions, up to $T=2$, but now the initial condition is
\[
u_0(x) = \sin\left( \pi x \right) +\tfrac14  \sin(15 \pi x)\;e^{-20x^2}.
\]
Again, the correct rates are achieved in all cases.
Note the high gain in accuracy obtained with the high order schemes even on coarse grids.

%
%
%
%
%
%

\begin{test}{Burgers' equation: shock interaction}\label{t:Burgers} \end{test}
This is a test on shock formation and shock interaction. We consider Burgers' equation in $[-1,1]$ with initial condition
\[
u_0(x) = 0.2 -\sin(\pi x) + \sin(2 \pi x) 
\]
and periodic boundary conditions.
The exact solution develops two shocks, which eventually collide, merging into a single discontinuity. We show three snapshots on the same panel in Figg. 
\ref{fig:burgersCWtq} and \ref{fig:burgersW}, with two zoom areas, which are enlarged on the right. The dashed black curve is the initial condition. The second curve is the solution at the time in which the two shocks develop ($T=1/(2 \pi)$). The third curve is slightly before shock interaction ($T=0.6$), with a detail enlarged in the figure appearing in the centre (zoom 1). The last curve is taken shortly after shock interaction ($T=1$), and a zoom of the interaction region is shown in the right panel (zoom 2). 


Fig. \ref{fig:burgersCWtq} shows the results obtained with CWENO schemes, with order 3, 5, 7, and 9 (black, blue, green and red curves respectively). The number of grid points is $N=160$. It is clear that the schemes do not produce spurious oscillations, and have an excellent resolution of discontinuities. As the order is increased, the profiles become sharper.
\begin{figure}
\centering
\includegraphics[width=0.3\linewidth]{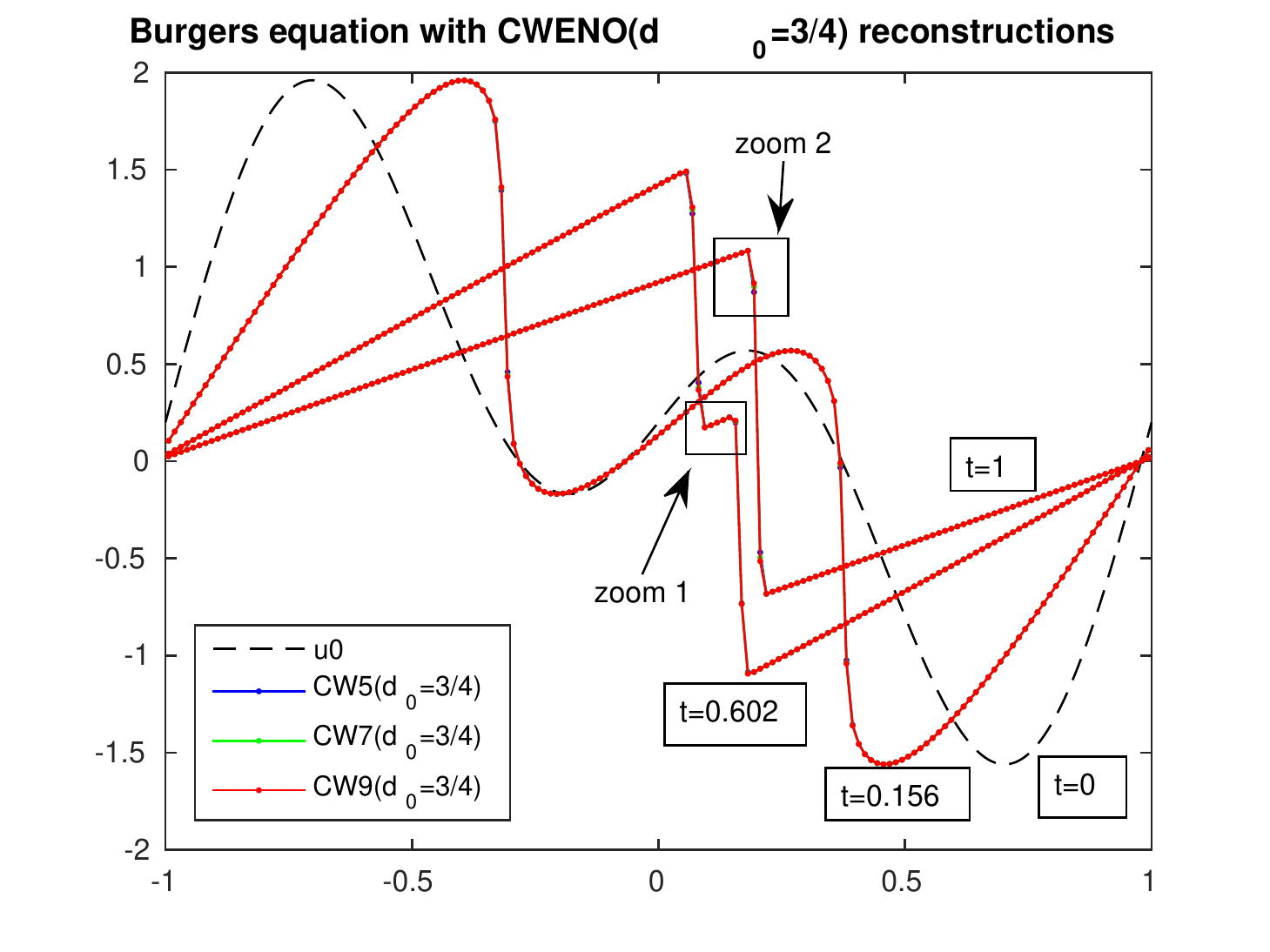}
\includegraphics[width=0.3\linewidth]{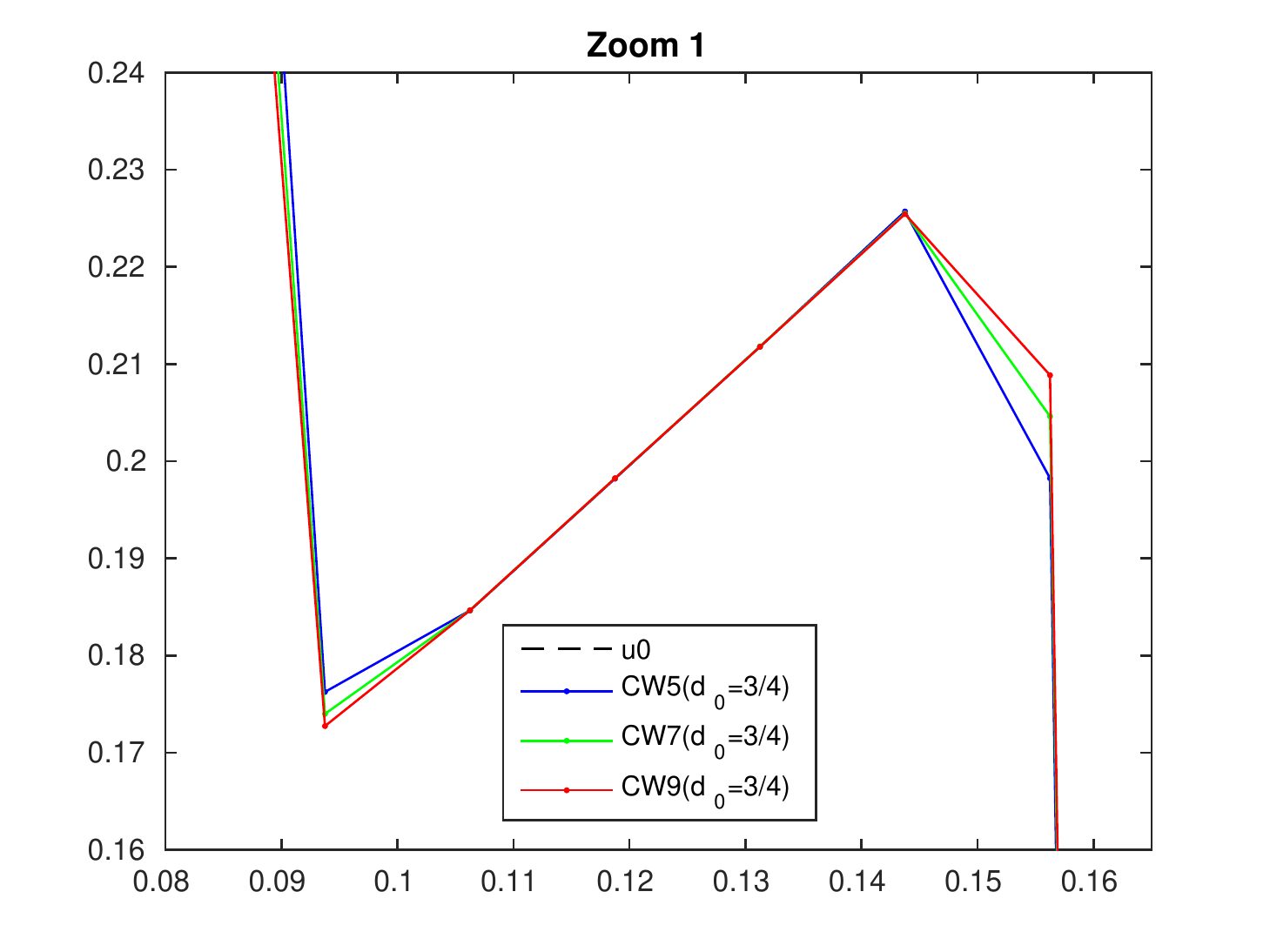}
\includegraphics[width=0.3\linewidth]{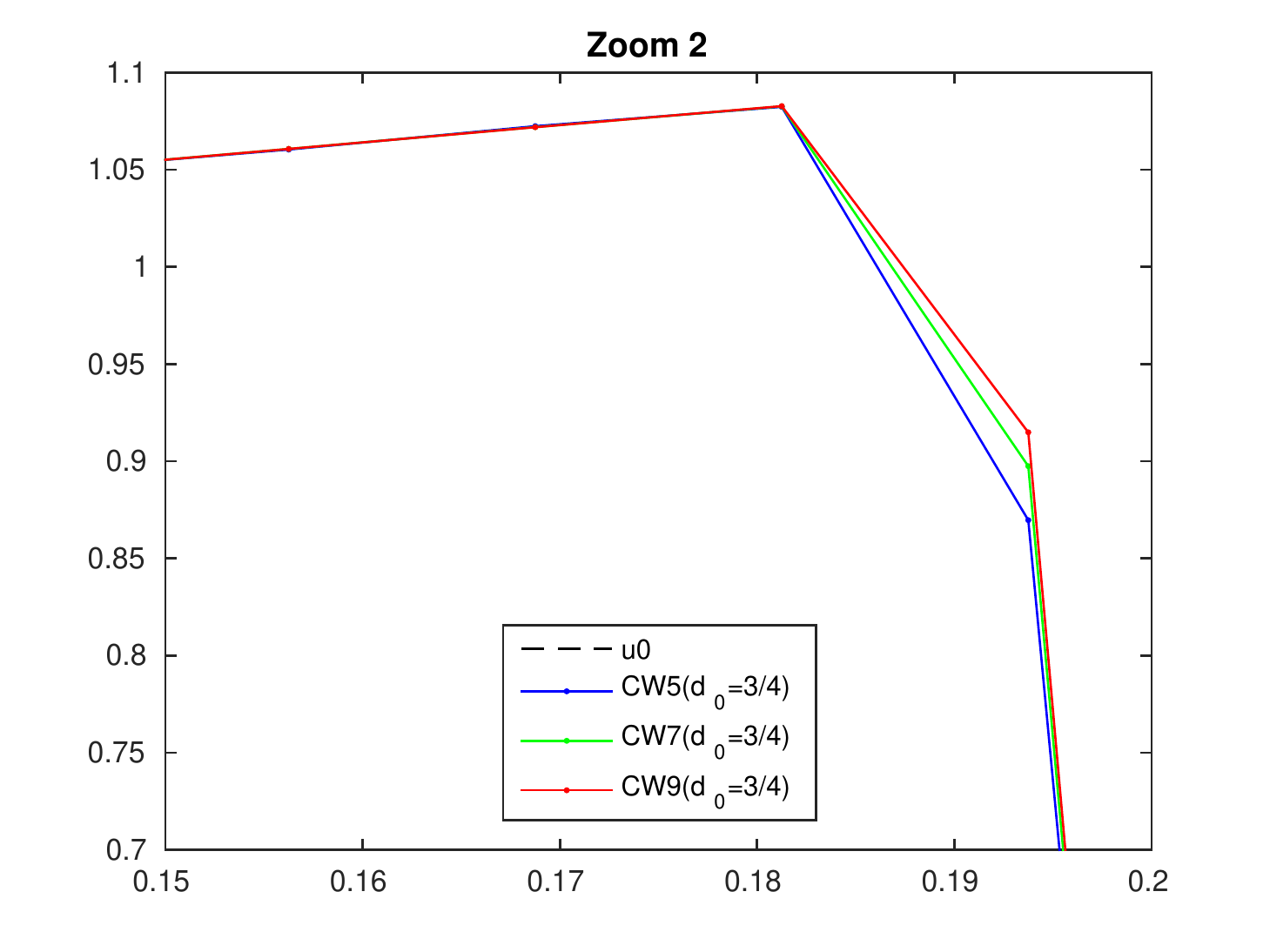}
\caption{Burgers' equation and shock interaction: CWENO schemes.
Evolution of the solution (left). Zoom slightly before (middle) and after (right) shock interaction.
}
\label{fig:burgersCWtq}
\end{figure}
For comparison, we also show the same results, obtained with the WENO scheme in Fig. \ref{fig:burgersW}. Note that the results are very similar.

\begin{figure}
\centering
\includegraphics[width=0.3\linewidth]{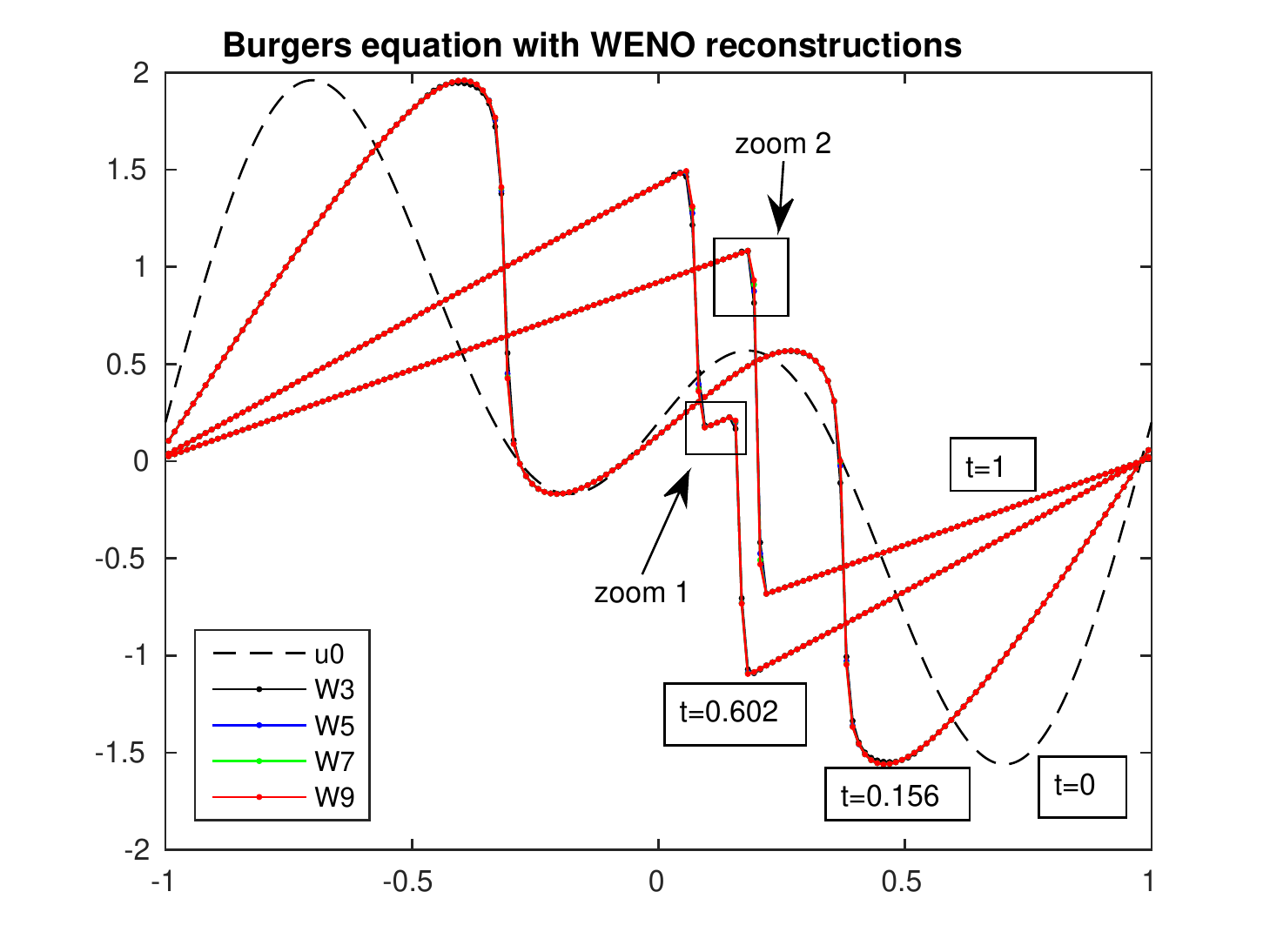}
\includegraphics[width=0.3\linewidth]{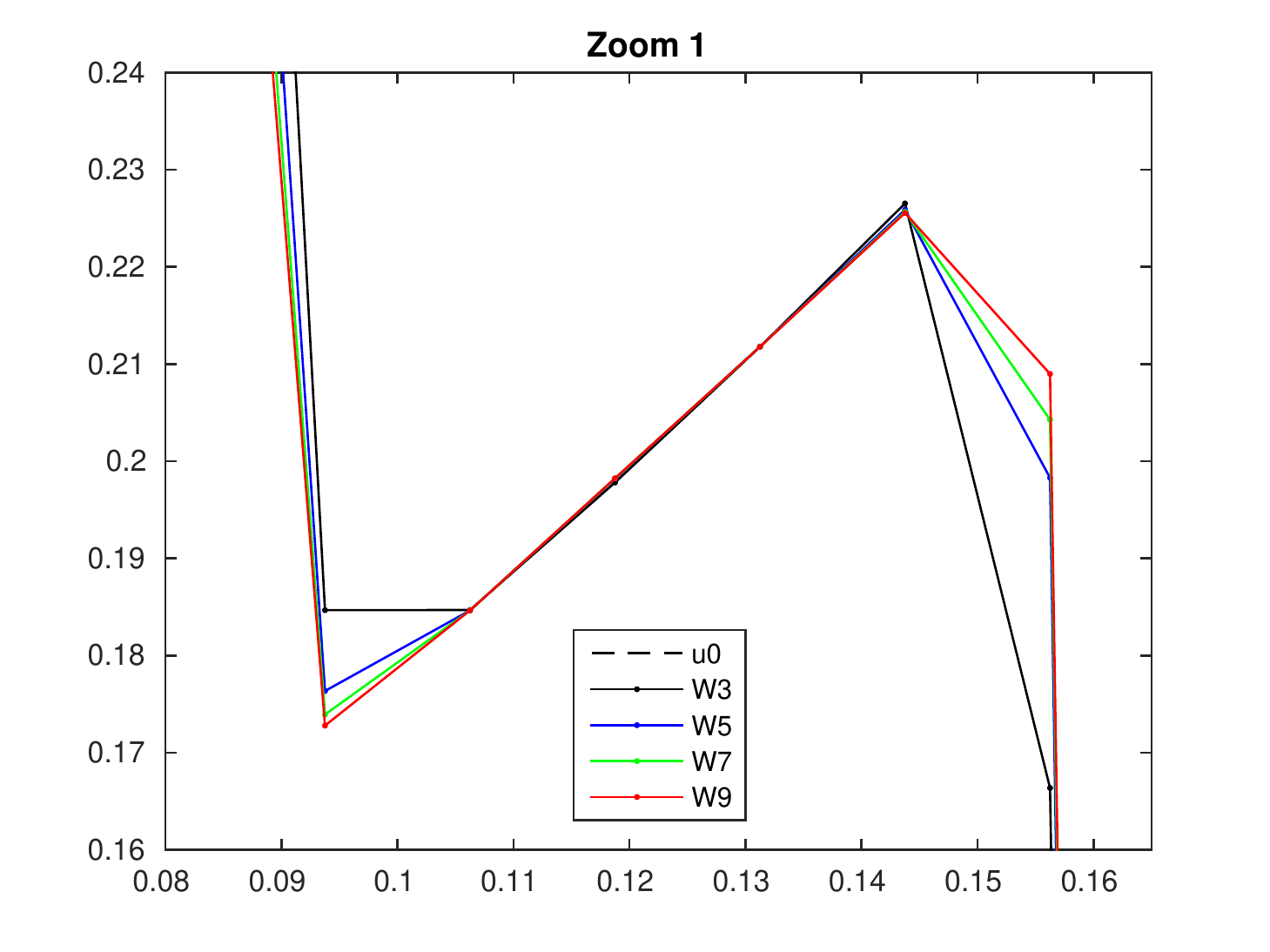}
\includegraphics[width=0.3\linewidth]{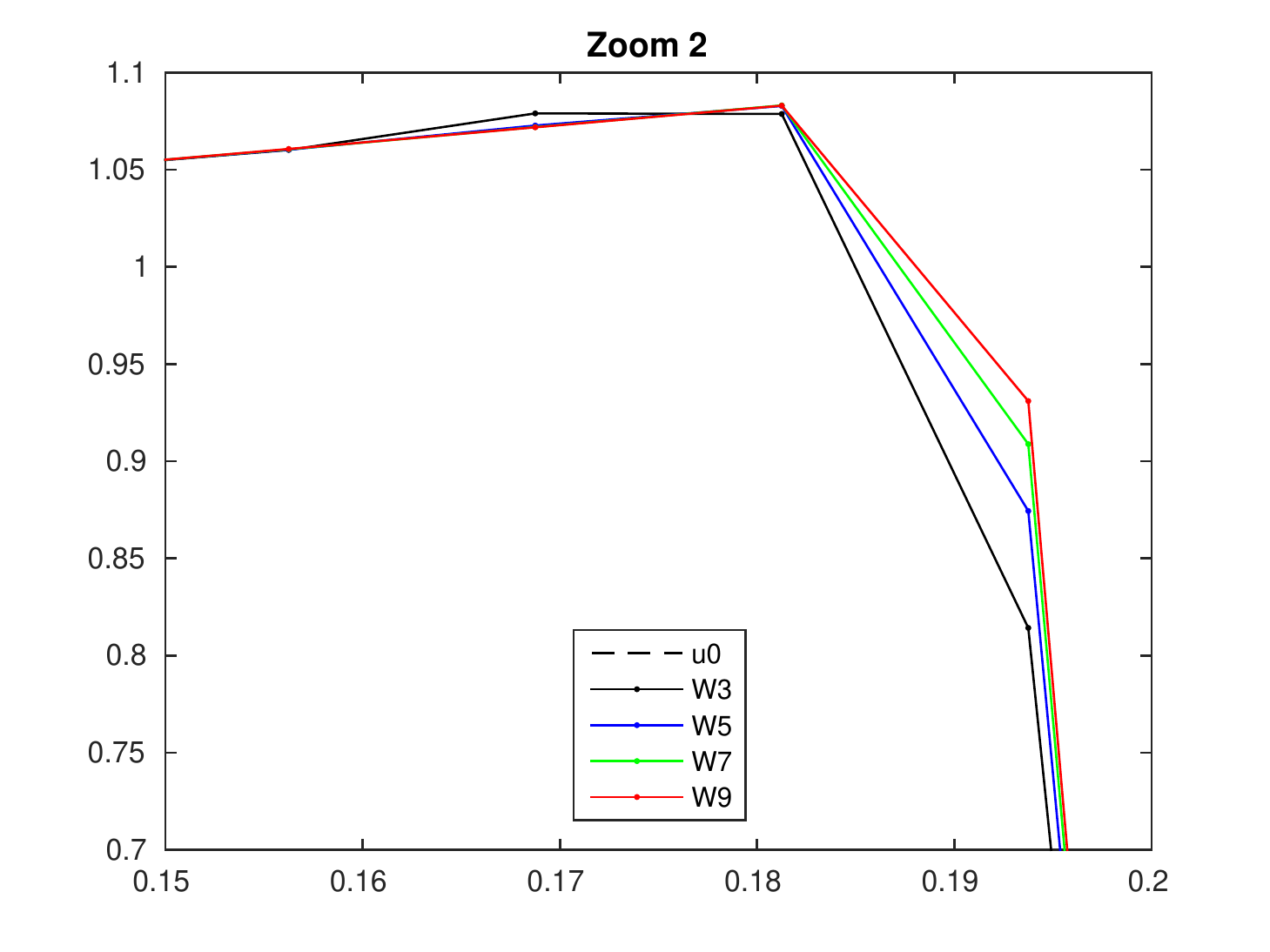}
\caption{Burgers' equation and shock interaction:  standard WENO schemes.
Evolution of the solution (left). Zoom slightly before (middle) and after (right) shock interaction.
}
\label{fig:burgersW}
\end{figure}

\begin{test}{Gas dynamics: Lax' Riemann problem}\label{t:Lax} \end{test}

The equations of gas dynamics for an ideal gas in one space dimension are
\[ \partial_t \left( \begin{array}{c}
\rho \\ \rho u \\ E
\end{array}\right) +
\partial_x \left( \begin{array}{c}
\rho u \\ \rho u^2 + p \\ u(E+p)
\end{array}\right)  = 0,
\]
where $\rho$ is the gas density, $u$ the velocity, $p$ the pressure, and $E$ the energy per unit volume. The pressure is linked to the other variables through the equation of state of an ideal gas, namely $p=(E-\tfrac12 \rho u^2)(\gamma-1)$, and we take $\gamma=1.4$. The Riemann problem by Lax has the following left and right states: $\rho_L=0.445, u_L=0.6989, p_L=3.5277$ and $\rho_R=0.5, u_R=0, p_R =0.571$. The solution develops a rarefaction wave travelling left, a contact discontinuity and a shock, both with positive speeds. The most interesting region is the density peak which occurs between the contact and the shock wave, where high order essentially non oscillatory schemes are known to develop spurious oscillations. For this reason, we show only a zoom on the density peak. It is well known that essentially non oscillatory and WENO schemes develop oscillations with amplitude decreasing under grid refinement, while their amplitude increases with the order of the scheme, at a given mesh width. 

The oscillations are originated by the interaction between waves in the first stages of the solution, when the discontinuities are so close that the algorithm cannot find a smooth stencil. Thus, they can be partly cured computing the reconstruction along characteristic fields, where the waves are approximately decoupled, \cite{QiuShu:02}. 

\begin{figure}
\centering
\begin{tabular}{cc}
\includegraphics[width=0.45\linewidth]{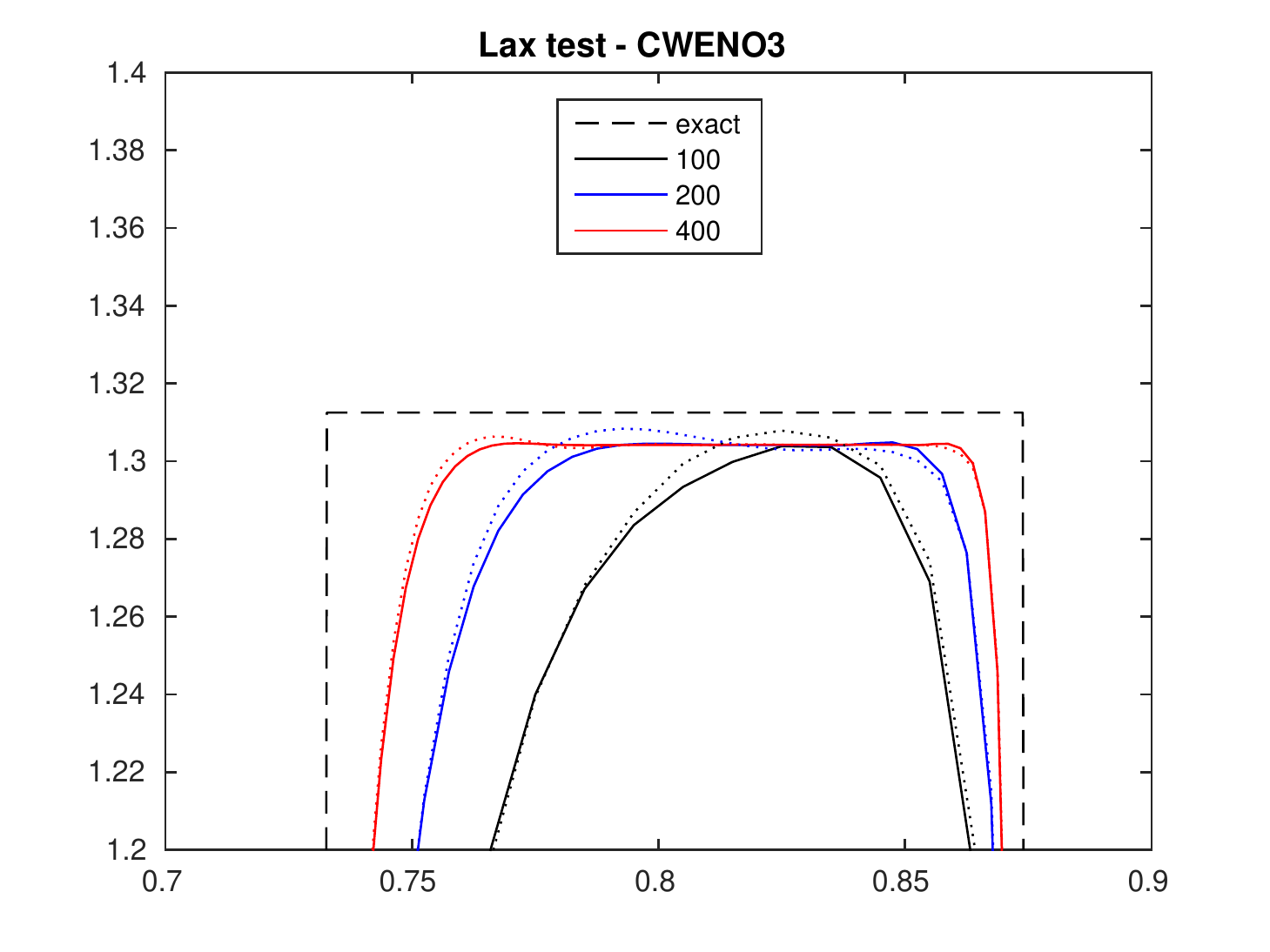}
&
\includegraphics[width=0.45\linewidth]{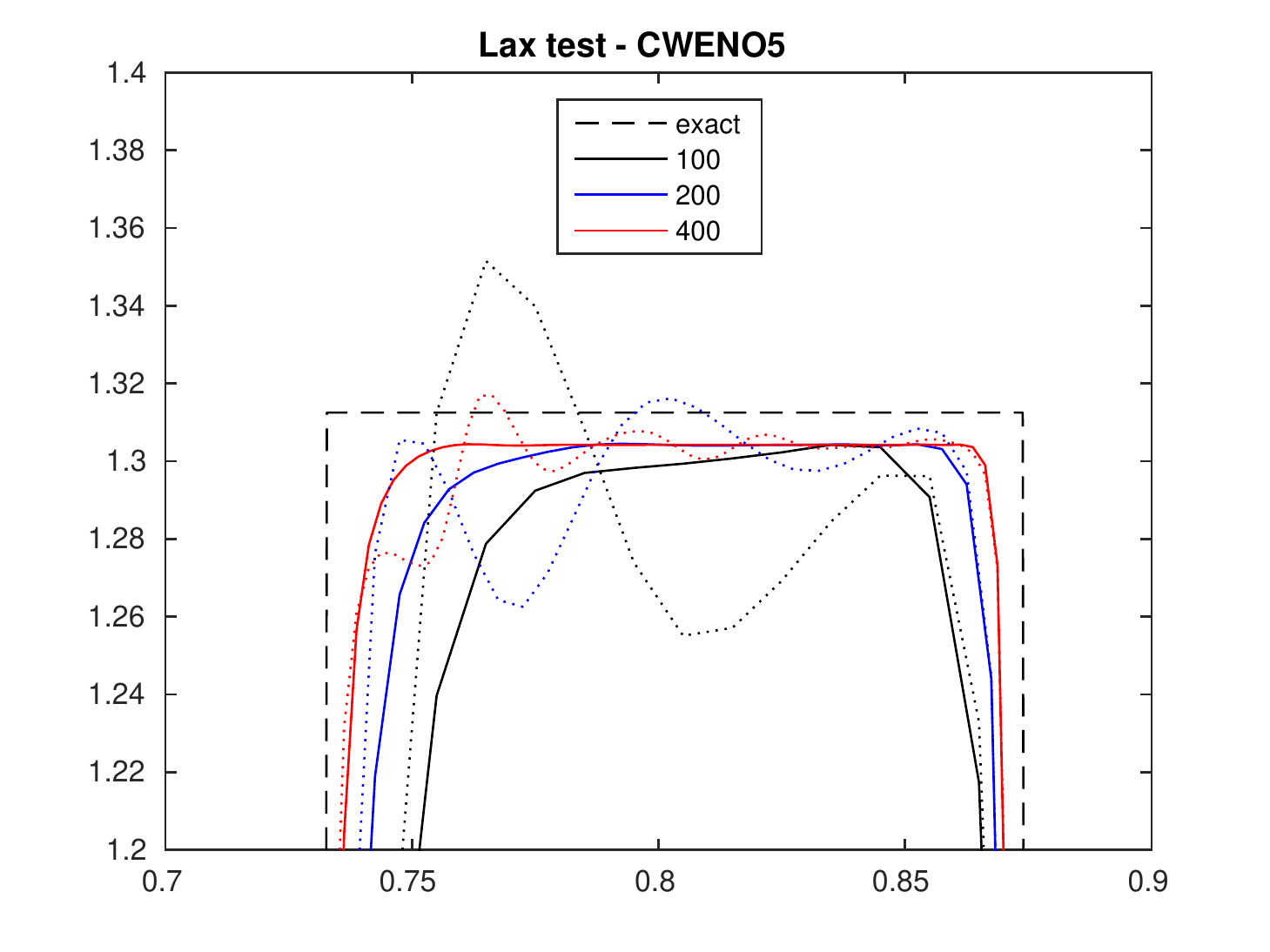}
\end{tabular}
\caption{Lax' test. Zoom on the density peak. $\CWENO3$ (left) and $\CWENO5$ (right) on several grids. The reconstruction is computed along characteristic directions (continuous lines) and on conservative variables (dotted lines).}
\label{fig:Lax_topzoom35}
\end{figure}

Fig. \ref{fig:Lax_topzoom35} contains the density peak obtained with CWENO3 (left) and CWENO5 (right) schemes. The continuous lines correspond to reconstructions computed along characteristic directions, for which the data in the whole stencil are projected along characteristic direction, before the reconstruction is computed, while the dashed curves are the standard reconstruction on conservative variables. Each figure contains the data obtained with $N=100, 200$ and $400$ grid points (black, blue and red curves, respectively). The improvement obtained with characteristic projection is quite dramatic, especially for the higher order schemes. In these two cases, the spurious oscillations disappear. Note also the improvement in the resolution of the waves with the high order CWENO5.

The following figure (Fig. \ref{fig:Lax_topzoom79}) contains the results obtained with CWENO7 and CWENO9 (top row). As a comparison, the same results with the standard $\WENO7$ and $\WENO9$ schemes are included in the bottom row plots of the same figure. As expected, the spurious oscillations become wilder for these high order schemes, unless the reconstruction is computed along characteristic directions.

\begin{figure}
\centering
\begin{tabular}{cc}
\includegraphics[width=0.45\linewidth]{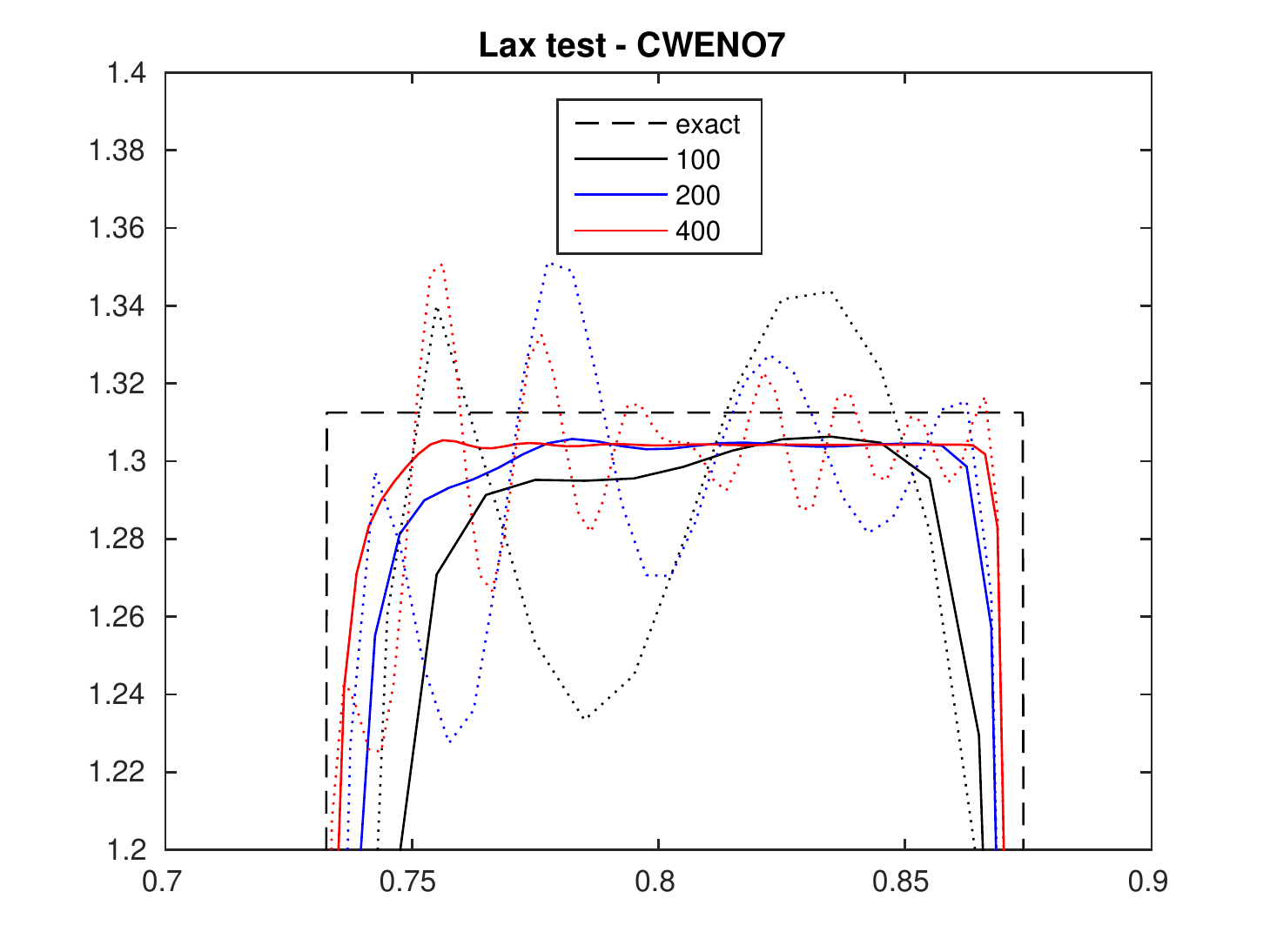}
&
\includegraphics[width=0.45\linewidth]{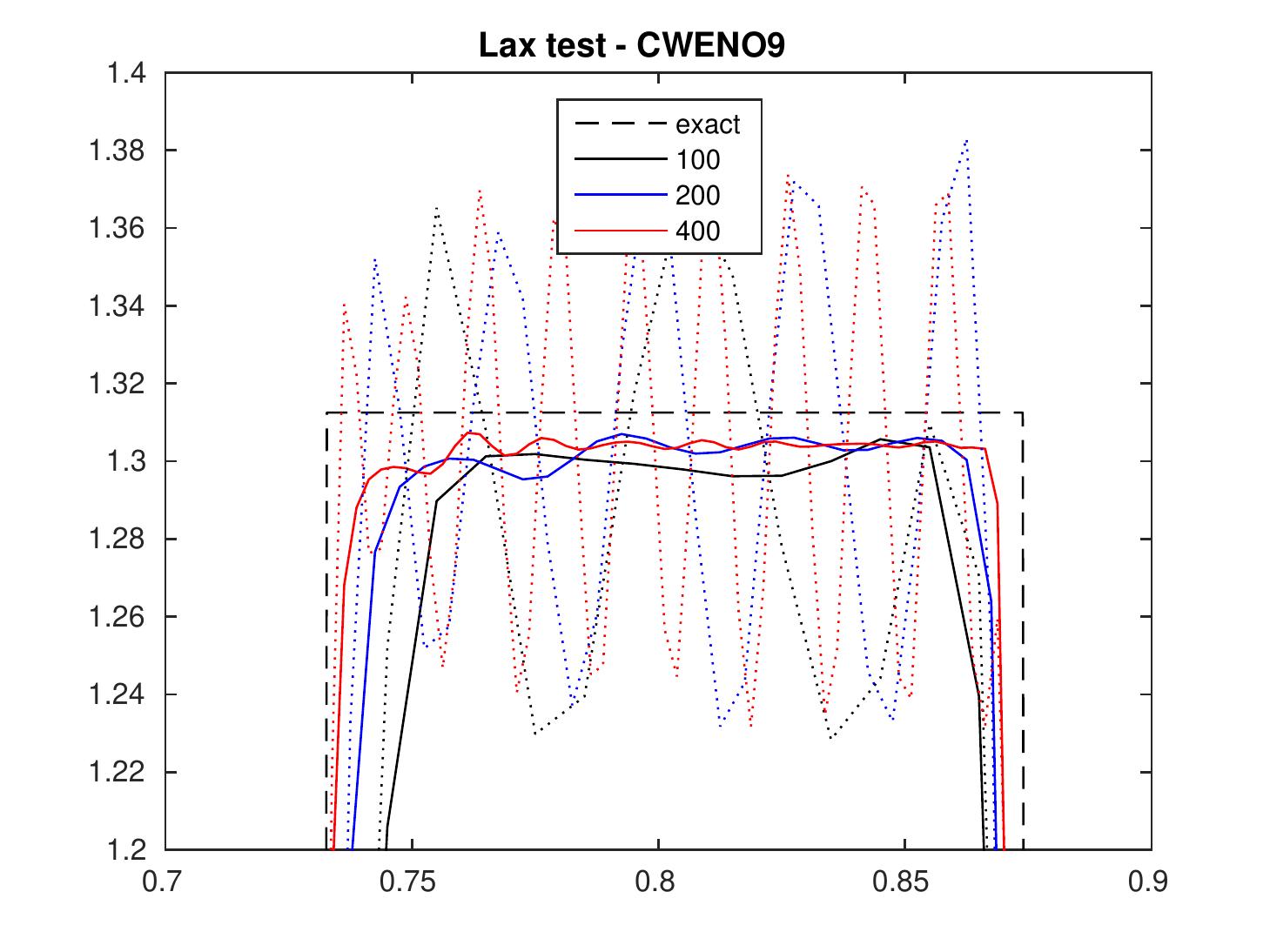}
\\
\includegraphics[width=0.45\linewidth]{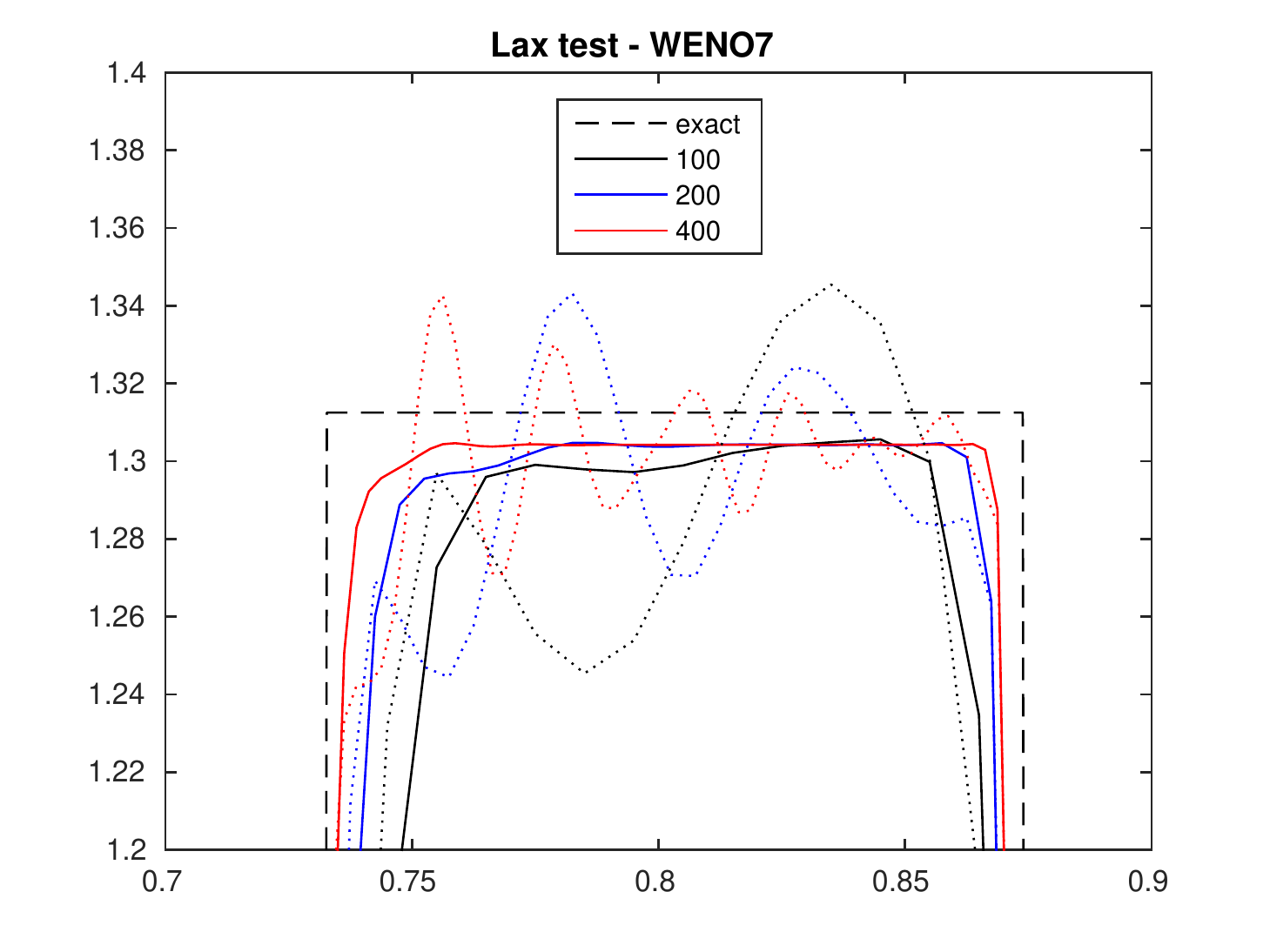}
&\includegraphics[width=0.45\linewidth]{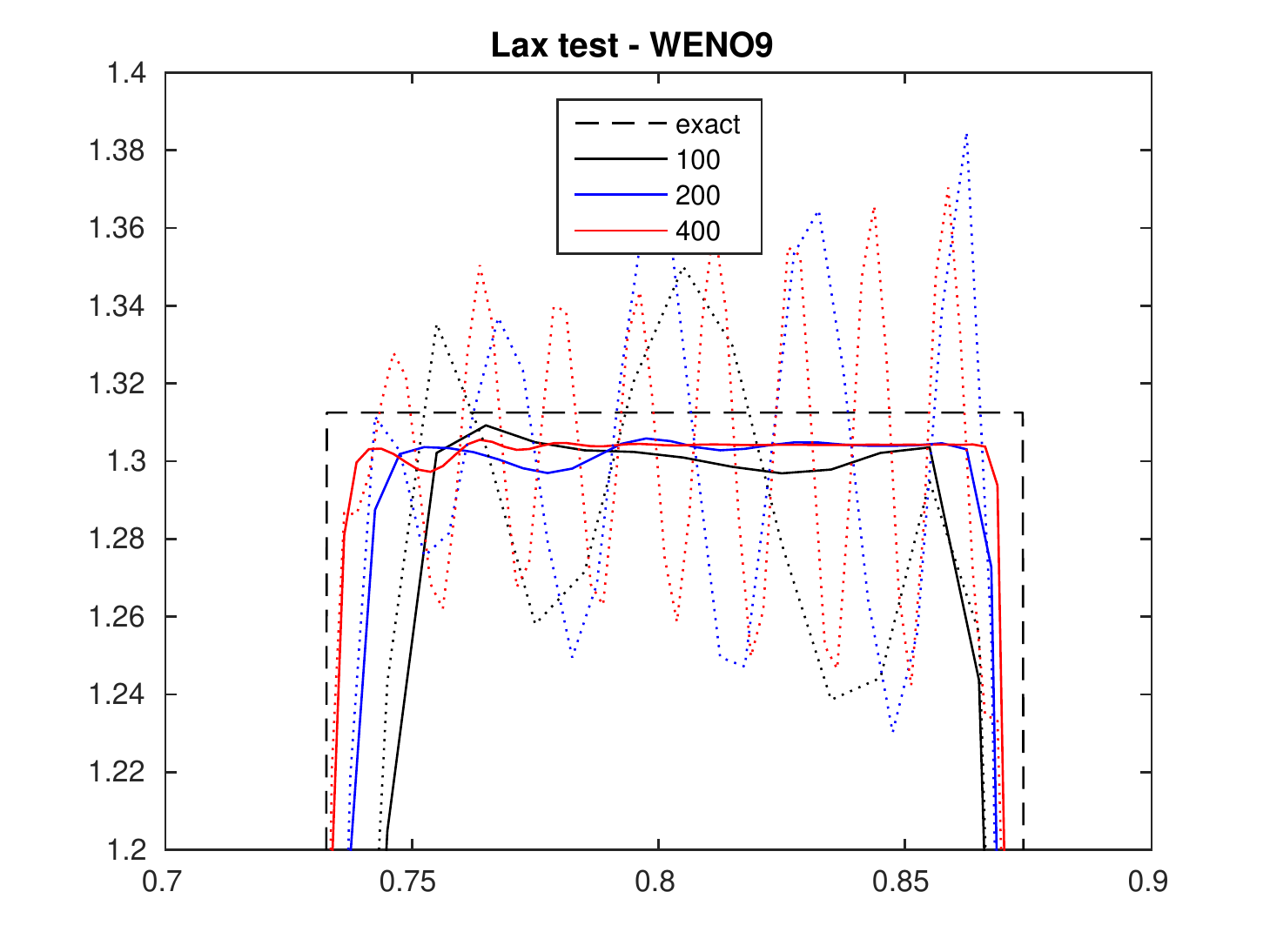}
\end{tabular}
\caption{Lax' test. Zoom on the density peak. $\CWENO7$ (top left) and $\CWENO9$ (top right), $\WENO7$ (bottom left) and $\WENO9$ (bottom right) on several grids. The reconstruction is computed along characteristic directions (continuous lines) and on conservative variables (dotted lines).}
\label{fig:Lax_topzoom79}
\end{figure}

The results discussed so far show that the new reconstructions are comparable to standard WENO reconstructions, not only as far as accuracy is concerned, but also in terms of non oscillatory, or essentially non oscillatory, properties. 
In both cases, for high order schemes, it is essential to employ characteristic projections, which could also be done in an adaptive way, as suggested in \cite{Puppo:adaptive} and\cite{PS:entropy}.

\subsection{Schemes for balance laws}
In balance laws, the reconstruction algorithm is used not only to evaluate the solution at the boundary of the cell, but also at interior nodes. In fact, the cell averages of the source term are evaluated with high order quadratures, which typically involve also interior nodes. Here, the $\CWENO$ technique permit to compute the cell averages of the source term with a single reconstruction.

\begin{test}{Shallow water equations: convergence rates on a non-flat riverbed}\end{test}
We consider the shallow water system, namely
\begin{equation}\label{eq:swe}
	u=\begin{pmatrix} h\\q\end{pmatrix}
	\qquad
    f(u) = \begin{pmatrix} q\\q^2/h+ \tfrac12 g h^2  \end{pmatrix} 
	\qquad
	g(u,x) = \begin{pmatrix} 0\\-ghz_x\end{pmatrix}.
  \end{equation}
Here $h$ denotes the water height, $q$ is the discharge and $z(x)$ the bottom topography, while $g$ is the gravitational constant. 

Following \cite{XingShu:2005:WBSWEfd}, we compute the flow with initial data given by
\begin{equation}
\label{eq:test:Shu}
z(x)=\sin^2(\pi x)
\qquad
h(0,x) = 5+e^{\cos(2\pi x)}
\quad
q(0,x) = \sin(\cos(2\pi x)),
\end{equation}
with periodic boundary conditions on the domain $[0,1]$. At time $t=0.1$ the solution is still smooth and we compare the numerical results with a reference solution computed with the fourth order scheme and $16384$ cells. The 1-norm of the errors appears in Table \ref{tab:SWE:Shu}. The well balanced quadrature is computed using Richardson's extrapolation, based on the trapezoidal rule. This means that the source term average is computed using the two boundary value reconstructions and additionally 3, 7 and 15 internal
reconstructions to achieve 5th, 7th and 9th order accuracy respectively. We emphasise that all these reconstructed data are computed from a single $\CWENO$ reconstruction polynomial, using the same weights for all coefficients. Note that the order of accuracy is perfectly met, until machine precision is reached.

This test would be extremely demanding on a standard WENO reconstruction, since the non linear weights must be changed for each quadrature node.

\begin{table}
\begin{centering}
\begin{tabular}{|r|rr|rr|rr|rr|} \hline
 & \multicolumn{2}{c|}{$\CWENO 3$} & \multicolumn{2}{c|}{$\CWENO 5$}
  & \multicolumn{2}{c|}{$\CWENO 7$} & \multicolumn{2}{c|}{$\CWENO 9$}
\\
N & error & rate & error & rate & error & rate & error & rate
\\
\hline
16 & 4.62e-02 &      & 5.53e-03 &      & 1.34e-03 &       & 6.92e-04 &      \\
32 & 1.04e-02 & 2.16 & 4.13e-04 & 3.74 & 7.39e-05 & 4.18 & 2.83e-05 & 4.61 \\
64 & 2.10e-03 & 2.30 & 1.75e-05 & 4.56 & 6.74e-07 & 6.78 & 1.23e-07 & 7.85 \\
128 & 3.14e-04 & 2.74 & 5.78e-07 & 4.92 & 5.02e-09 & 7.07 & 3.45e-10 & 8.48 \\
256 & 3.55e-05 & 3.15 & 1.82e-08 & 4.99 & 3.91e-11 & 7.00 & 7.44e-13 & 8.86 \\
512 & 2.42e-06 & 3.88 & 5.71e-10 & 4.99 & 3.08e-13 & 6.99 &    & \\ \hline
\end{tabular}
\caption{Errors and convergence rates for SW convergence on a non flat riverbed.}
\label{tab:SWE:Shu}
\end{centering}
\end{table}


\begin{test}{Shallow water equations: well-balancing test on a rough bottom}\end{test}

This is a classical test, to explore the well balancing properties of a scheme, see \cite{NatvigEtAl}. We consider a flat lake $z(x)+h(x)\equiv 1.5$, with water at rest. The bottom cell averages are randomly extracted from a uniform distribution on $[0,1]$. Thus the function $z(x)$ is extremely irregular, but nonetheless the exact solution preserves the flat surface, and the water should remain still. A well balanced scheme preserves this solution at machine precision.

\begin{table}
\begin{centering}
\begin{tabular}{|l|cccc|cccc|}\hline
method 
& \multicolumn{4}{c|}{error in $q$}\\ \hline
& N=100 & N=200 & N=400 & N=800\\
$\CWENO 9$ 
   & 7.4471e-16 & 1.4354e-15 & 1.8279e-15 & 2.5115e-15 \\
$\CWENO 7$ 
   & 2.1206e-15  & 3.0564e-15 &  7.1562e-15 &  1.6473e-14\\
$\CWENO 5$ 
   & 1.7490e-15  &  3.0874e-15   & 5.3284e-15 &   9.9496e-15\\
$\CWENO 3$ 
   & 1.9032e-15   & 3.5655e-15 &   4.7854e-15 &   7.6668e-15 \\ \hline
\end{tabular}
\caption{Well balancing errors on a rough lake at rest. Discharge}
\label{tab:SWE:Rough}
\end{centering}
\end{table}
We report in Table \ref{tab:SWE:Rough} the values of the discharge computed by all CWENO schemes tested in this work for several grids. It is clear that in all cases the discharge is zero within machine precision, so that the  quadrature of the source is indeed well balanced in all cases, notwithstanding the fact that, again, it is computed with a single polynomial for all quadrature nodes.

The data on the water height have the same precision, and are not reported for brevity.

\begin{test}{Shallow water equation: dam-break over a hump}\end{test}

This test studies the movement of a shock and a rarefaction on a shallow water problem, with non constant bottom topography. The initial conditions for the water surface $H(x)=h(x)+z(x)$ and the discharge are
\begin{displaymath}
H(x,t=0) = \begin{cases} 1.5 & x<0 \\ 0.5 & x>0, \end{cases}
\text{ and } \quad
q(x,t=0) \equiv 0,
\end{displaymath}
on $[-2,2]$, and the bottom topography is $z(x)=0.3\,e^{-10x^2}$. The final time is $t=0.2$.
This set up contains a discontinuity on the amount of water, in correspondence with a hump in the bottom topography. As the solution develops, a shock moves towards the right, while a rarefaction wave travels left.
\begin{figure}
\centering
\includegraphics[width=0.45\linewidth]{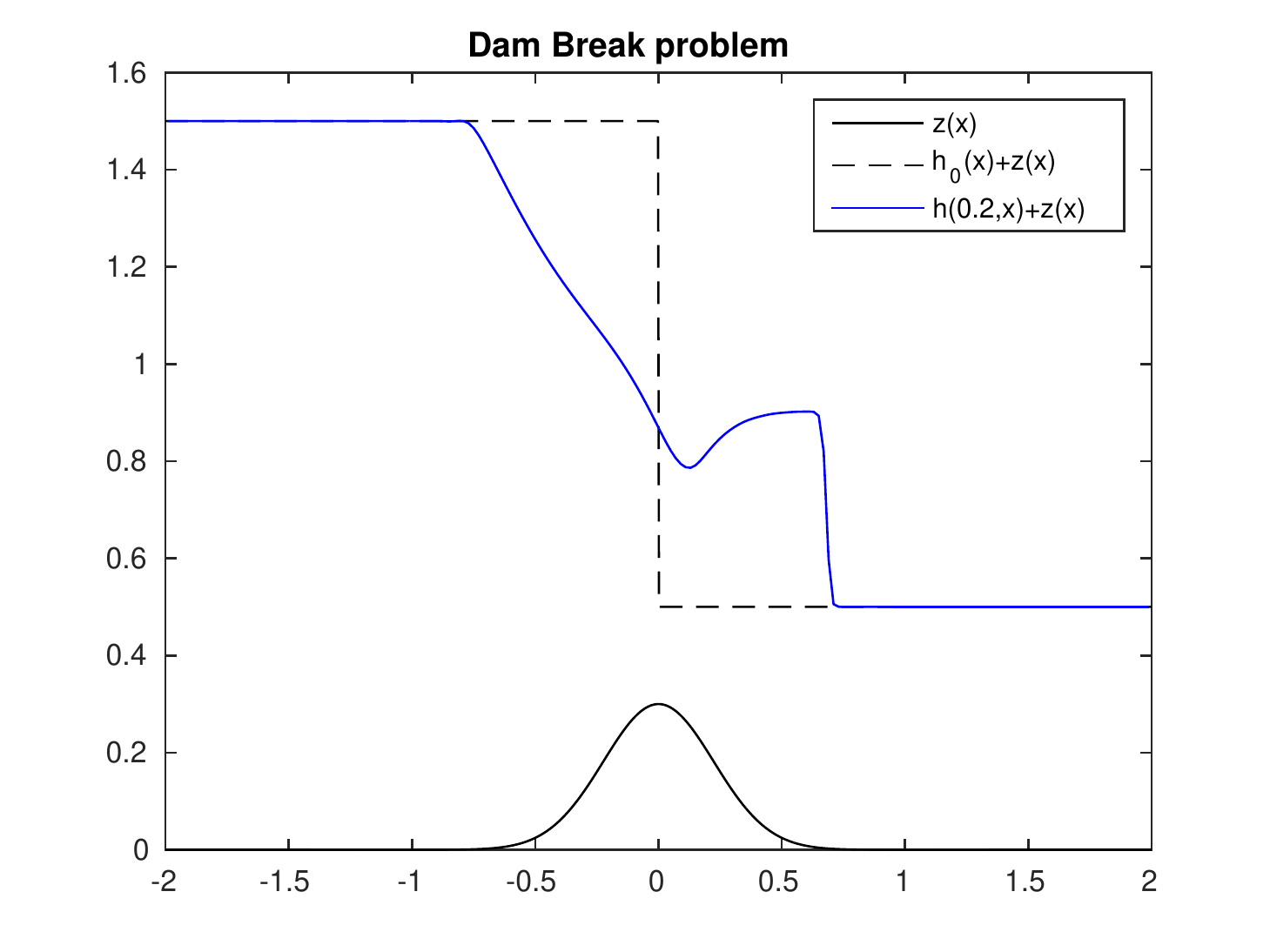}
\hfill
\includegraphics[width=0.45\linewidth]{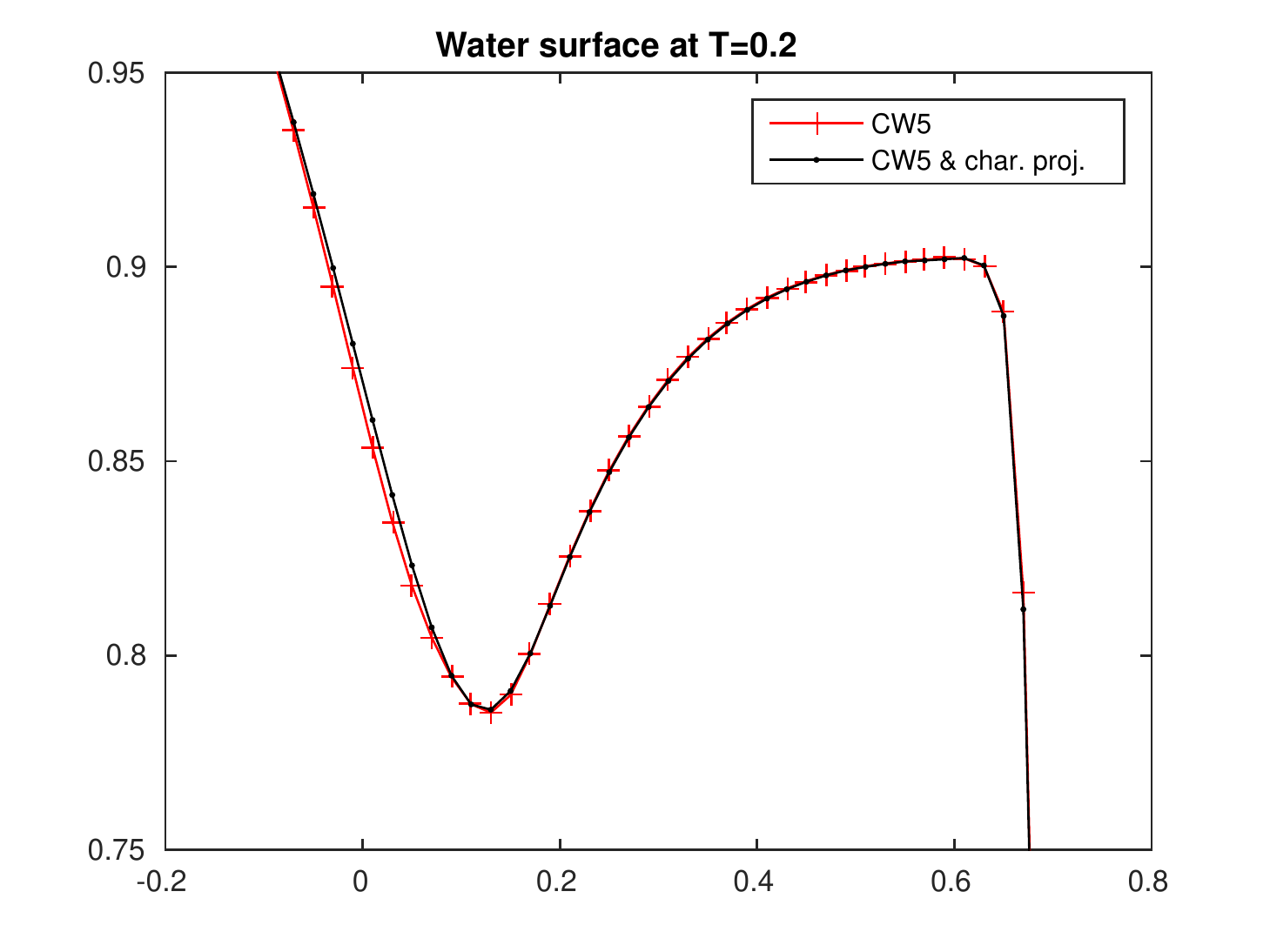}
\\
\includegraphics[width=0.45\linewidth]{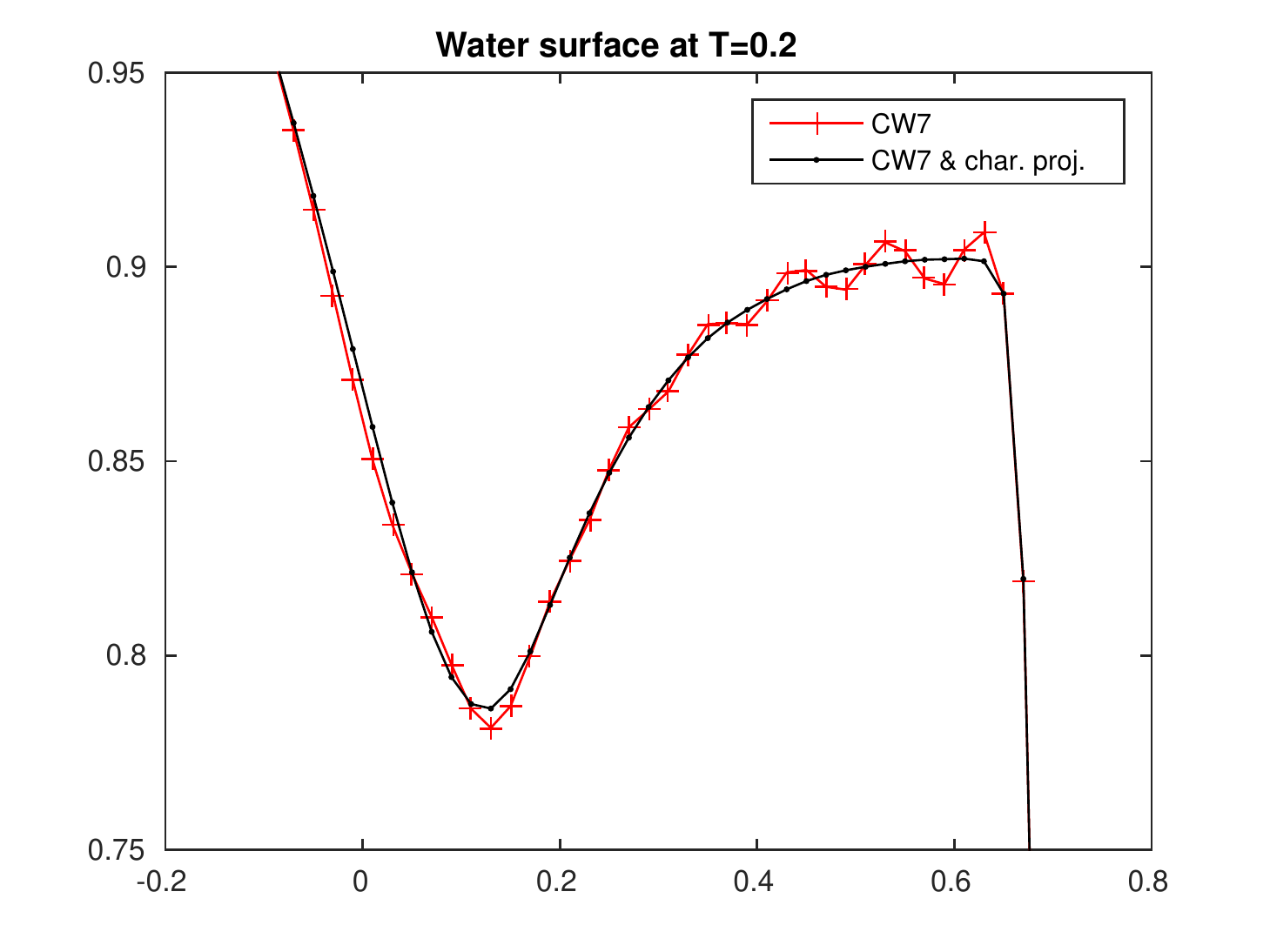}
\hfill
\includegraphics[width=0.45\linewidth]{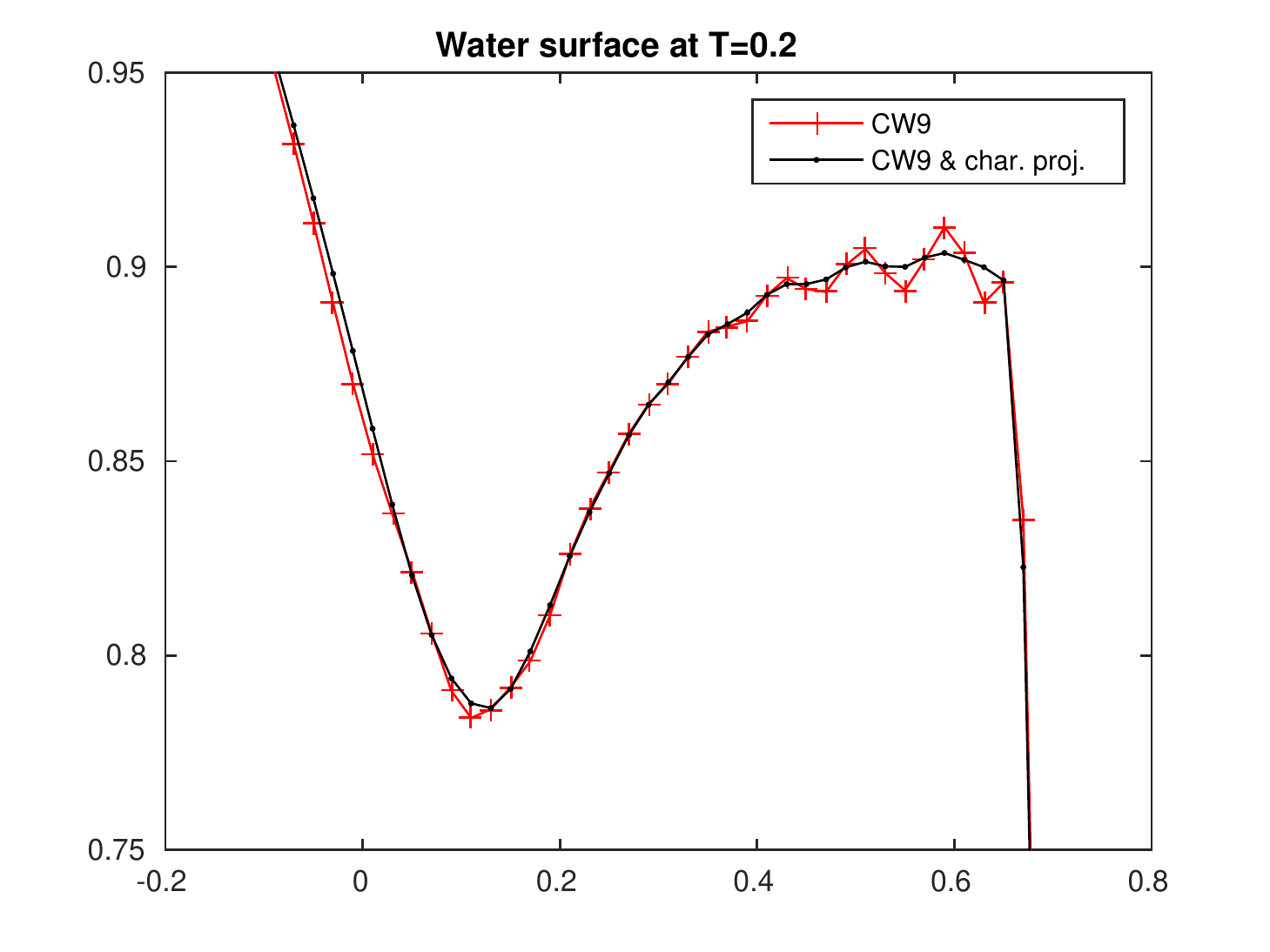}
\caption{Dam break over a hump. Top left: water height at time $t=0.2$. The remaining plots are zooms on the tail of the rarefaction and the jump, for $\CWENO 5, 7$ and 9. The black solid line is with characteristic projections.}
\label{fig:SWEdamBreak}
\end{figure}

The results on the water surface for $N=200$ are shown in Fig. \ref{fig:SWEdamBreak}, with zooms on the most difficult parts of the solution for the $\CWENO 5, 7$ and 9 schemes. Again, the numerical solution exhibits spurious oscillations behind the shock (red curve, with + markers), which can be levelled out using the characteristic projection, before evaluating the reconstruction (black solid lines). The same behaviour can be observed in the solution for the discharge.

\begin{test}{Gas dynamics: Riemann Problem in spherical coordinates}\end{test}
In the case of radial symmetry, the gas dynamics equations can be written as a 1D system, with a source term, which takes into account the geometrical effect, \cite[\S1.6.3]{Toro:book}.
Radially symmetric solutions of the Euler equations in $\R^n$ may be computed by solving

\[ \partial_t \left( \begin{array}{c}
\rho \\ \rho u \\ E
\end{array}\right) +
\partial_r \left( \begin{array}{c}
\rho u \\ \rho u^2 + p \\ u(E+p)
\end{array}\right)  
= 
-\frac{n-1}{r} 
\left( \begin{array}{c}
\rho u \\ \rho u^2 \\ up
\end{array}\right).
\]

We compute the so-called ``explosion problem'', which has a shock tube like initial data. In our case, we take Sod's test data, namely $(\rho_L,u_L,p_L) = (1,0,1)$ for $r<0.5$ and 
$(\rho_R,u_R,p_R) = (0.125,0,0.1)$ for $r>0.5$.
The final time of the simulation is $t=0.25$.

In order to avoid difficulties with the boundary conditions in the singular point $r=0$, and taking into account that the 
computed solution will have null velocity $u$ (and thus null source term) close to $r=0$, because of the initial data,
we computed the solutions for $r\in[-1,1]$ with symmetric initial data and free-flow boundary conditions. Gaussian quadrature formulas of appropriate order are employed to compute the cell average of the source term and the grid is chosen in order to avoid quadrature nodes at the singular point $x=0$.
The solution at final time obtained with $N=400$ cells is shown in the picture \ref{fig:radialSod}, restricted to the domain $r\in[0,1]$. Again, we show the density profiles, since the density contains the main features of the flow. The zoom in the density profile centred on the contact wave is shown for the reconstruction computed along conservative variables (central plot of the figure), and along characteristic variables (right plot).  Each plot contains the solution obtained with all four different schemes tested in this work. The cyan curve is given by $\CWENO 3$, and the improvement in the resolution of the contact wave obtained increasing the accuracy of the scheme is quite apparent. Here too, only one reconstruction polynomial is needed for each Runge Kutta stage.
Also in this test the dramatic improvement obtained with the projection along characteristic variables is striking.

\begin{figure}
\centering
\includegraphics[width=0.3\linewidth]{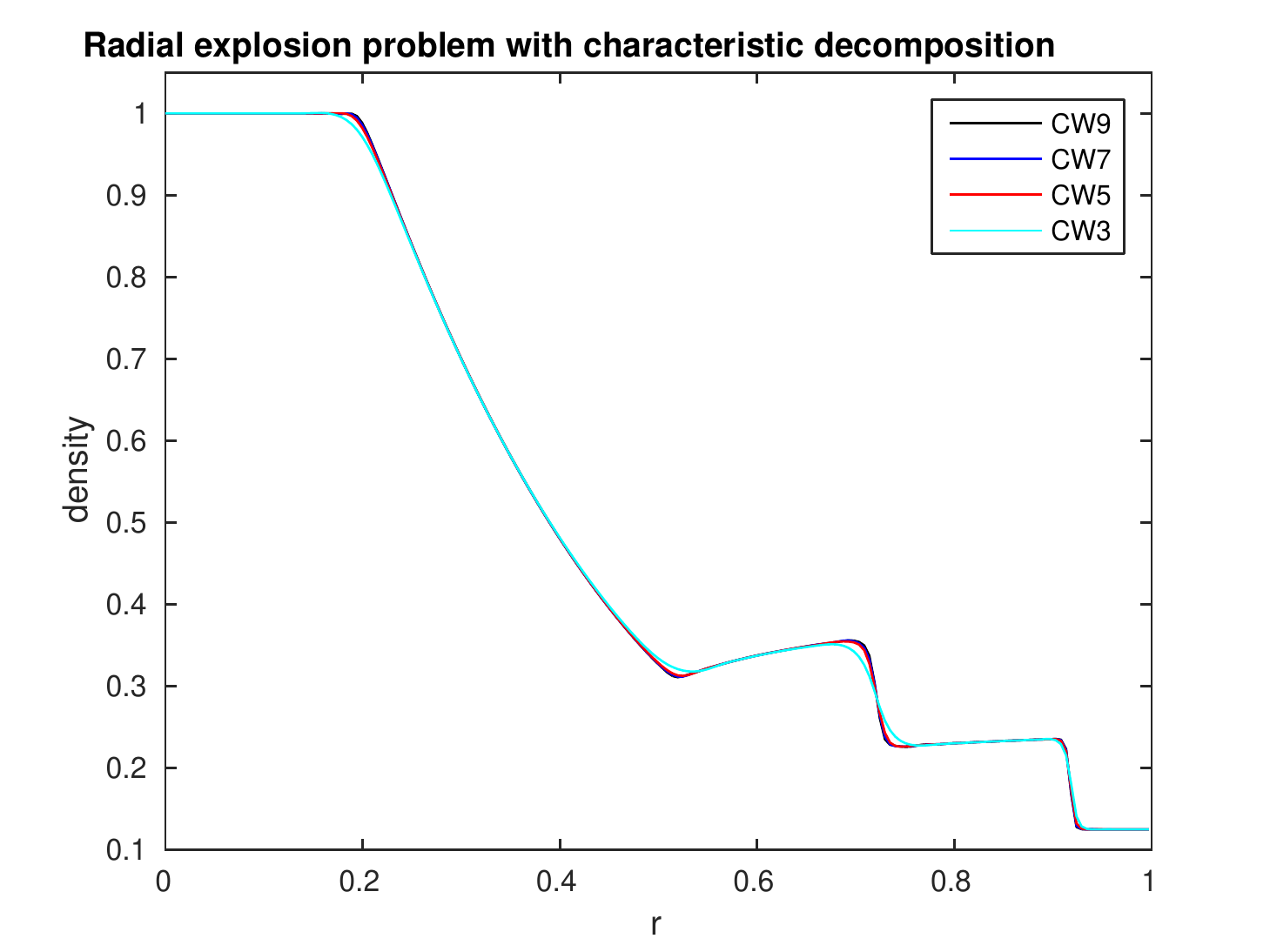}
\hfill
\includegraphics[width=0.3\linewidth]{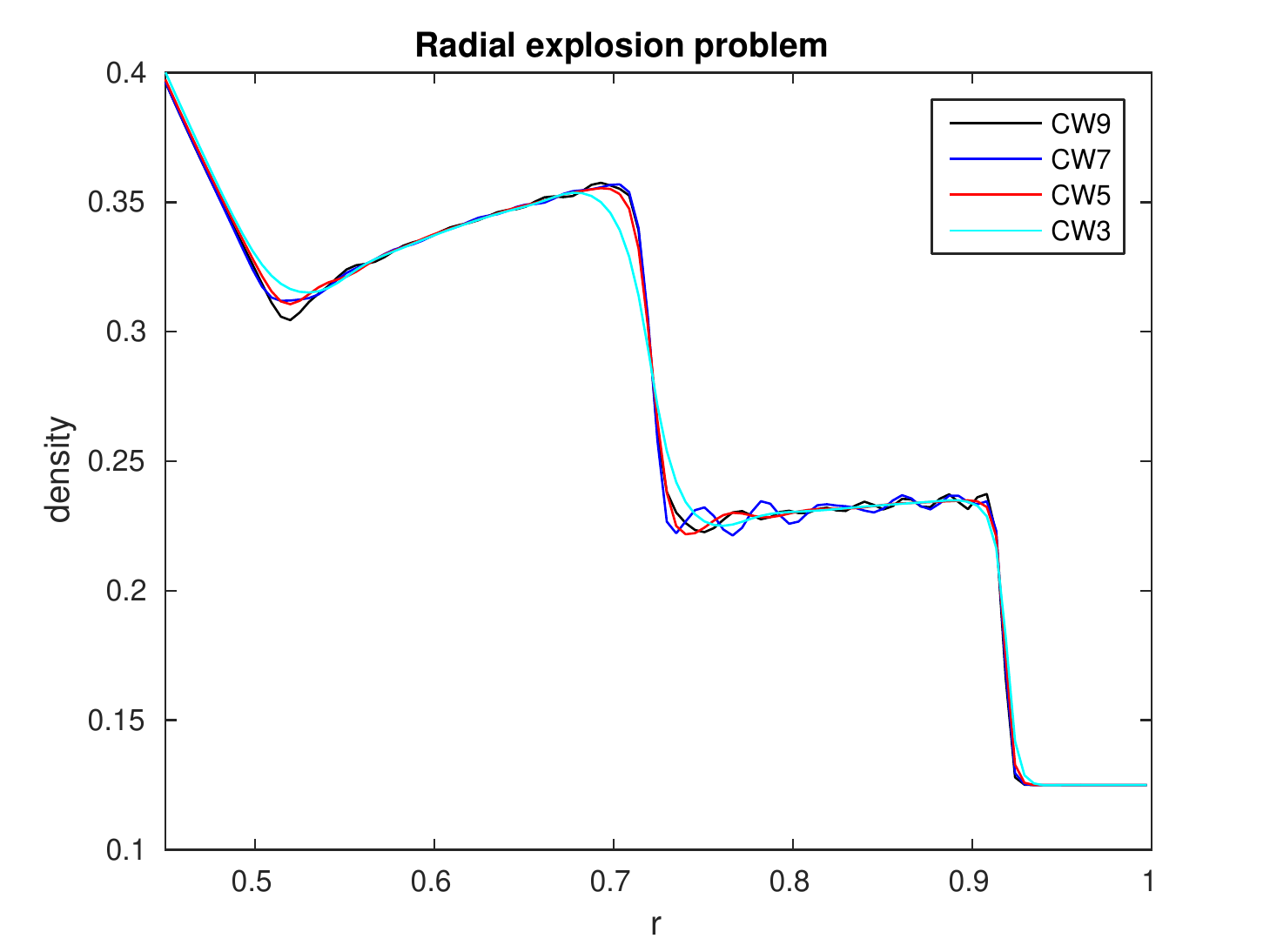}
\hfill
\includegraphics[width=0.3\linewidth]{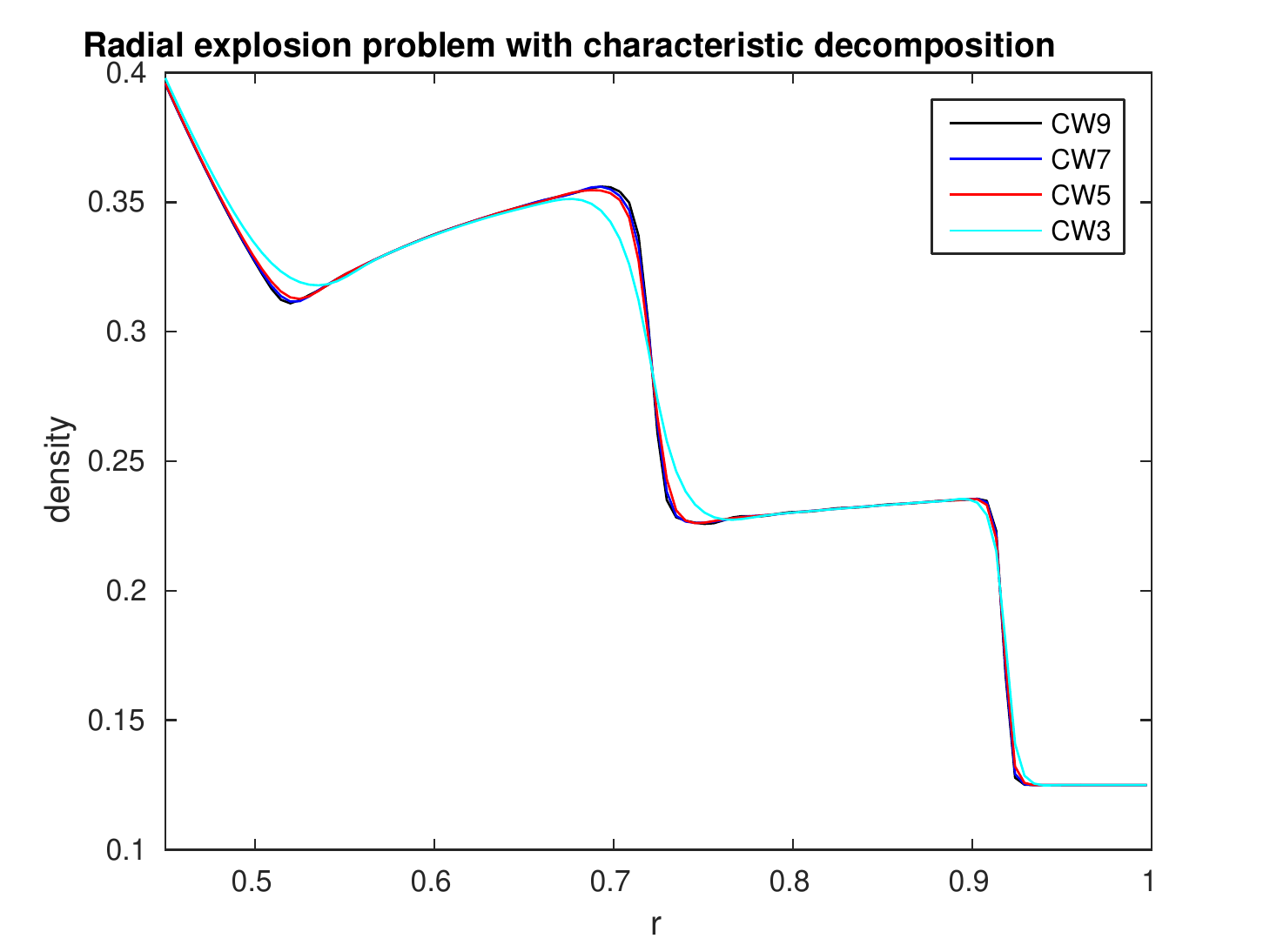}
\caption{Sod's explosion problem: density profiles for several $\CWENO$ schemes (left), zoom on the contact and shock wave with the reconstruction computed along conservative variables (middle), and along characteristic variables (right).}
\label{fig:radialSod}
\end{figure}

\section{Conclusions}   \label{sec:conclusions}
In this paper we introduced a class of spatial reconstruction procedures that are characterised by computing a reconstruction function whose accuracy is uniform across the whole cell, instead of reconstructed point values, as in the standard $\WENO$ reconstruction. This class of algorithms contains the already proposed $\CWENO3$ of \cite{LPR:99}, $\CWENO5$ of \cite{Capdeville:08} and the two-dimensional third order reconstruction of \cite{SCR:16}. 

In particular, within this framework, we focused on one-dimensional reconstruction procedures of any odd order $2g+1$ (which were never considered before for $g>2$) and proved that the nonlinear mechanism for stencil selection guarantees the desired accuracy of order $2g+1$ when the procedure is applied to smooth enough data. The non-oscillatory properties of the reconstruction in the presence of discontinuities in the input data are studied more deeply than in previous papers and a sufficient condition (property R) is given, to direct the choice of the parameters appearing in the reconstruction, to avoid spurious oscillations.
Moreover, it is shown that any the one-dimensional $\CWENO$ scheme satisfy property R.

We think that this is the first time that the potential of these reconstructions is explored in the case of balance laws, and their properties are systematically studied.

The new schemes perform on par with $\WENO$ reconstructions regarding accuracy on smooth data and the production of spurious oscillations close to discontinuities, but they are, in our opinion, more versatile than $\WENO$, because they result in a whole reconstructing polynomial which can be evaluated where needed. This is very important on balance laws, non uniform grids, moving mesh algorithms. In fact, in $\CWENO$ schemes,  the accuracy requirements involve only the degree of the candidate polynomials and not the values of suitably chosen linear coefficients. This means that, in a $\CWENO$ procedure, the linear coefficients can be chosen independently of the point at which the reconstruction is to be evaluated and independently of the relative size of the neighbouring cells. 

With these new schemes, unlike $\WENO$, it is possible to compute boundary value reconstructions on uniform or non-uniform grids (to compute numerical fluxes), and, at the same time, evaluate  the reconstruction at points in the interior of the computational cells, for evaluating quadratures of source terms, {\em with the same reconstruction polynomial}. The same polynomial can also be used to compute quantities that employ quadrature formulas in the cell, as in the initialisation of cell averages after a grid refinement on h-adaptive schemes or after mesh movement in moving mesh techniques. Another important application is the computation of cell averages of functions of the conserved variables arising in the computation of local residuals for a posteriori error control, as in the case of the numerical entropy error indicator. A very important application can be found in finite volume schemes for balance laws, in the computation of cell averages of source terms. This latter application in particular is tested in this paper, for accuracy orders up to 9.

In this paper we also introduce formulas to compute the reconstructions, in one space dimension, from the divided differences of the data in the case of non-uniform grids, and we provide tables of coefficients, obtained from undivided differences in the case of uniform grids. We note that the structure of these tables, whose entries do not depend on the degree of the polynomial to be computed, allows easily to raise or lower the degree of the reconstruction. The exploitation of this feature for p-adaptivity will be the subject for future work.

This paper is mainly concerned on $\CWENO$ reconstructions in one space dimension. The extension to multidimensional in the case of Cartesian grids is straightforward, but it is also possible to extend these techniques to unstructured grids.

\bibliographystyle{plain}
\bibliography{CWENOH}

\end{document}